\documentclass[a4paper,11pt]{article}
\usepackage[left=3.17cm,right=3.17cm,top=2.54cm,
headheight=0.5cm,headsep=0.54cm,bottom=2.54cm,footskip=0.79cm
]{geometry}

\usepackage[all]{xy}
\usepackage{amsmath,amsfonts,amssymb,amsthm,epsfig,amscd,comment,latexsym,psfrag}
\usepackage{nicefrac,xspace,tikz}

\usetikzlibrary{arrows}
\usetikzlibrary{decorations.markings}
\def\Z{\mathbb Z}

\def\C{\mathbb C}

\def\1{{\bf{1}}}

\usepackage[pdfstartview=FitH]{hyperref}

\def\onebox{\begin{tikzpicture}
\draw (0,0)rectangle(0.25,0.25);
\draw (0,0.25) -- (0.175,0.35) [-];
\draw [shift = {+(0.25,0)}](0,0.25) -- (0.175,0.35) [-];
\draw [shift = {+(0.25,-0.250)}](0,0.25) -- (0.175,0.35) [-];
\draw (0.175,0.35) -- (0.425,0.35) [-];
\draw (0.425,0.35) -- (0.425,0.1) [-];
\end{tikzpicture}}
\def\twoboxy{\begin{tikzpicture}
\draw (0,0)rectangle(0.5,0.25);
\draw (0.25,0) -- (0.25,0.25) [-];
\draw (0,0.25) -- (0.175,0.35) [-];
\draw [shift = {+(0.25,0)}](0,0.25) -- (0.175,0.35) [-];
\draw [shift = {+(0.5,0)}](0,0.25) -- (0.175,0.35) [-];
\draw [shift = {+(0.5,-0.250)}](0,0.25) -- (0.175,0.35) [-];
\draw (0.175,0.35) -- (0.675,0.35) [-];
\draw [shift = {+(0.25,0)}] (0.425,0.35) -- (0.425,0.1) [-];
\end{tikzpicture}}
\def\twoboxz{\begin{tikzpicture}
\draw (0,0)rectangle(0.25,0.5);
\draw (0,0.25) -- (0.25,0.25) [-];
\draw [shift = {+(0,0.25)}](0,0.25) -- (0.175,0.35) [-];
\draw [shift = {+(0.25,0.25)}](0,0.25) -- (0.175,0.35) [-];
\draw [shift = {+(0.25,0)}](0,0.25) -- (0.175,0.35) [-];
\draw [shift = {+(0.25,-0.250)}](0,0.25) -- (0.175,0.35) [-];
\draw (0.425,0.625) -- (0.425,0.1) [-];
\draw [shift = {+(0.175,0.35)}] (0,0.25) -- (0.25,0.25) [-];
\end{tikzpicture}}
\def\twoboxx{\begin{tikzpicture}
\draw (0,0)rectangle(0.25,0.25);

\draw (0,0.25) -- (0.35,0.45) [-];
\draw [shift = {+(0.25,0)}] (0,0.25) -- (0.35,0.45) [-];
\draw [shift = {+(0.25,-0.25)}] (0,0.25) -- (0.35,0.45) [-];
\draw [shift = {+(0.175,0.35)}](0,0) -- (0.25,0) [-];
\draw [shift = {+(0.25,-0.25)}][shift = {+(0.175,0.35)}](0,0) -- (0,0.25) [-];
\draw [shift = {+(0.35,0.45)}](0,0) -- (0.25,0) [-];
\draw [shift = {+(0.175,0.1)}] [shift = {+(0.25,-0.25)}][shift = {+(0.175,0.35)}](0,0) -- (0,0.25) [-];
\end{tikzpicture}}

\def\footnoterule{\kern 1mm \hrule width 7cm \kern 2.2mm}%

 \newtheorem{thm}{Theorem}[section]
 \newtheorem{prp}[thm]{Proposition}
 \newtheorem{lem}[thm]{Lemma}
 \newtheorem{cor}[thm]{Corollary}

\newcommand{\bea}{\begin{eqnarray}}
\newcommand{\eea}{\end{eqnarray}}
\newcommand{\beaa}{\begin{eqnarray*}}
\newcommand{\eeaa}{\end{eqnarray*}}
\newcommand{\be}{\begin{equation}}
\newcommand{\ee}{\end{equation}}
\newcommand{\nn}{\nonumber}

\begin{document}

\title{3D Bosons, 3-Jack polynomials and affine Yangian of ${\mathfrak{gl}}(1)$}
\author{Wang Na\dag\footnote{Corresponding author: wangna@henu.edu.cn },\ Wu Ke\ddag\\
\dag\small School of Mathematics and Statistics, Henan University, Kaifeng, 475001, China\\
\ddag\small School of Mathematical Sciences, Capital Normal University, Beijing 100048, China}

\date{}
\maketitle

\begin{abstract}
3D (3 dimensional) Young diagrams are a generalization of 2D Young diagrams. In this paper, We consider 3D Bosons and 3-Jack polynomials. We associate three parameters $h_1,h_2,h_3$ to $y,x,z$-axis respectively. 3-Jack polynomials are polynomials of $P_{n,j},
 n\geq j$ with coefficients in $\C(h_1,h_2,h_3)$, which are the generalization of Schur functions and Jack polynomials to 3D case. Similar to Schur functions, 3-Jack polynomials can also be determined by the vertex operators and the Pieri formulas.

\end{abstract}
\noindent
{\bf Keywords: }{Affine Yangian, 3D Young diagram, Bosons, Jack polynomials, Vertex operators.}

\section{Introduction}\label{sec1}
2D Young diagrams and symmetric functions are attractive research objects, which were used to determine irreducible characters of highest weight representations of the classical groups\cite{FH,Mac,weyl}. Recently they appear in mathematical physics, especially in integrable models. In \cite{MJD}, the group in the  Kyoto school uses Schur functions in a remarkable way to understand the KP and KdV hierarchies.  In \cite{NVT,PS}, Tsilevich and Su\l kowski, respectively, give the realization of the phase model in the algebra of Schur functions and build the relations between the $q$-boson model and Hall-Littlewood functions. In \cite{wang}, we build the relations between the statistical models, such as phase model, and KP hierarchy by using 2D Young diagrams and Schur functions. In \cite{WLZZ}, the authors show that the states in the $\beta$-deformed Hurwitz-Kontsevich matrix model can be represented as the Jack polynomials.

3D Young diagrams (plane partitions) are a generalization of 2D Young diagrams, which arose naturally in crystal melting model\cite{ORV,NT}. 3D Young diagrams also have many applications in many fields of mathematics and physics, such as statistical models, number theory, representations of some algebras (Ding-Iohara-Miki algebras, affine Yangian, etc). In this paper, we consider 3D Bosons and 3D symmetric functions.

The Schur functions $S_\lambda$ defined on 2D Young diagrams $\lambda$ can be determined by the vertex operator and the Jacobi-Trudi formula. Let ${ p}=(p_1,p_2,\cdots)$.
The operators $S_n({ p})$ are determined by the vertex operator:
\be\label{hxi}
e^{\xi({ p},z)}=\sum_{n=0}^\infty S_n({ p})z^n,\quad \text{with}\ \  \xi({ p},z)=\sum_{n=1}^\infty \frac{p_n}{n} z^n
\ee
and set $S_n({p})=0$ for $n<0$.
Note that $S_n({ p})$ is the complete homogeneous symmetric function by the Miwa transform, i.e., replacing $p_i$ with the power sum $\sum_k x_k^i$.
For 2D Young diagram $\lambda=(\lambda_1,\lambda_2,\cdots,\lambda_l)$, the Schur function $S_{\lambda}=S_{\lambda}({ p})$ is a polynomial of variables ${ p}$ in $\C[{ p}]$ defined by the Jacobi-Trudi formula \cite{FH,Mac}:
\be\label{slambda}
S_{\lambda}({ p})=\text{det}\left(
h_{\lambda_{i}-i+j}({ p})
 \right)_{1\leq i,j\leq l}.
\ee
The Jacobi-Trudi formula can be replaced by the Pieri formula\cite{FH}
\be
S_{n}S_\lambda=\sum_{\mu}C_{n,\lambda}^\mu S_{\mu}.
\ee

The 3-Jack polynomials, which are symmetric functions defined on 3D Young diagrams, can also be determined this way. We associate three parameters $h_1,h_2,h_3$ to $y,x,z$-axis respectively, where $h_1,h_2,h_3$ are the parameters appeared in affine Yangian of $\mathfrak{gl}(1)$. 3-Jack polynomials are symmetric about three coordinate axes, which means they are symmetric about $h_1,h_2,h_3$. Let $(n)$ be the 2D Young diagrams of $n$ box along $y$-axis, which is treated as 3D Young diagram which have one layer in $z$-axis direction, then 3-Jack polynomials $\tilde{J}_n$ can be determined by the vertex operator:
\be\label{exiy}
e^{\xi_y(P,z)}=\sum_{n\geq 0}\frac{1}{\langle \tilde{J}_n,\tilde{J}_n\rangle h_1^n}\tilde{J}_n(P)z^n,\ee
 with
  \be
  \xi_y(P,z)=\sum_{n,j=1}^\infty \frac{ P_{n,j}}{j!\left(\begin{array}{cc}{n+j-1}\\{n-j}\end{array}\right)}\frac{d_{n,j}}{h_1^j}\prod_{k=1}^{j-1}\frac{1}{k+h_2h_3}z^n,
\ee
where
\be
d_{n,j}=\begin{cases}
 1 & \text{ if } \ n=j, \\
  j& \text{ if}\ n>j.
\end{cases}
\ee
When $P_{n,1}=p_1,\ P_{n,j\geq 1}=0$, the vertex operator (\ref{exiy}) becomes (\ref{hxi}). The 3-Jack polynomials of $n$ boxes along $x$-axis or $z$-axis can also expressed this way from the symmetry. In fact, for 2D Young diagrams $\lambda$, which are treated as the 3D Young diagrams who have one layer, 3-Jack polynomials $\tilde{J}_\lambda$ can also be determined by the vertex operators. 3-Jack polynomials of 3D Young diagrams who have more than one layer can be determined by the ``Pieri formula" $\tilde{J}_\lambda\tilde{J}_\pi$ with $\pi$ being 3D Young diagrams. This Pieri formula can be determined by the representation of affine Yangian of $\mathfrak{gl}(1)$ on 3D Young diagrams.

The paper is organized as follows. In section \ref{sect2}, we recall the definition of affine Yangian of $\mathfrak{gl}(1)$ and its representation on 3D Young diagrams. In section \ref{sect3},  we consider 3D Bosons and the algebra $W$. In section \ref{sect4},  we discuss the realizations of 3D Bosons and the operators in the algebra $W$ by using affine Yangian of $\mathfrak{gl}(1)$ and its representation on 3D Young diagrams.
In section \ref{sect5}, we show the expressions of 3-Jack polynomials by the vertex operators and the Pieri formulas.
\section{Affine Yangian of $\mathfrak{gl}(1)$}\label{sect2}
In this section, we recall the definition of the affine Yangian of $\mathfrak{gl}(1)$ as in papers \cite{3-schur,Wang2,Pro, Tsy} first. Then we calculate some properties of affine Yangian which have relations with 3D Bosons.
The affine Yangian $\mathcal{Y}$ of $\mathfrak{gl}(1)$ is an associative algebra with generators $e_j, f_j$ and $\psi_j$, $j = 0, 1, \ldots$ and the following relations\cite{Pro, Tsy}
\begin{eqnarray}
&&\left[ \psi_j, \psi_k \right] = 0,\\
&&\left[ e_{j+3}, e_k \right] - 3 \left[ e_{j+2}, e_{k+1} \right] + 3\left[ e_{j+1}, e_{k+2} \right] - \left[ e_j, e_{k+3} \right]\nonumber \\
&& \quad + \sigma_2 \left[ e_{j+1}, e_k \right] - \sigma_2 \left[ e_j, e_{k+1} \right] - \sigma_3 \left\{ e_j, e_k \right\} =0,\label{yangian1}\\
&&\left[ f_{j+3}, f_k \right] - 3 \left[ f_{j+2}, f_{k+1} \right] + 3\left[ f_{j+1}, f_{k+2} \right] - \left[ f_j, f_{k+3} \right] \nonumber\\
&& \quad + \sigma_2 \left[ f_{j+1}, f_k \right] - \sigma_2 \left[ f_j, f_{k+1} \right] + \sigma_3 \left\{ f_j, f_k \right\} =0, \label{yangian2}\\
&&\left[ e_j, f_k \right] = \psi_{j+k},\label{yangian3}\\
&& \left[ \psi_{j+3}, e_k \right] - 3 \left[ \psi_{j+2}, e_{k+1} \right] + 3\left[ \psi_{j+1}, e_{k+2} \right] - \left[ \psi_j, e_{k+3} \right]\nonumber \\
&& \quad + \sigma_2 \left[ \psi_{j+1}, e_k \right] - \sigma_2 \left[ \psi_j, e_{k+1} \right] - \sigma_3 \left\{ \psi_j, e_k \right\} =0,\label{yangian4}\\
&& \left[ \psi_{j+3}, f_k \right] - 3 \left[ \psi_{j+2}, f_{k+1} \right] + 3\left[ \psi_{j+1}, f_{k+2} \right] - \left[ \psi_j, f_{k+3} \right] \nonumber\\
&& \quad + \sigma_2 \left[ \psi_{j+1}, f_k \right] - \sigma_2 \left[ \psi_j, f_{k+1} \right] + \sigma_3 \left\{ \psi_j, f_k \right\} =0,\label{yangian5}
\end{eqnarray}
together with boundary conditions
\begin{eqnarray}
&&\left[ \psi_0, e_j \right]  = 0, \left[ \psi_1, e_j \right] = 0,  \left[ \psi_2, e_j \right]  = 2 e_j ,\label{yangian6}\\
&&\left[ \psi_0, f_j \right]  = 0,  \left[ \psi_1, f_j \right]  = 0,  \left[ \psi_2, f_j \right]  = -2f_j ,\label{yangian7}
\end{eqnarray}
and a generalization of Serre relations
\begin{eqnarray}
&&\mathrm{Sym}_{(j_1,j_2,j_3)} \left[ e_{j_1}, \left[ e_{j_2}, e_{j_3+1} \right] \right]  = 0, \label{yangian8} \\
&&\mathrm{Sym}_{(j_1,j_2,j_3)} \left[ f_{j_1}, \left[ f_{j_2}, f_{j_3+1} \right] \right]  = 0,\label{yangian9}
\end{eqnarray}
where $\mathrm{Sym}$ is the complete symmetrization over all indicated indices which include $6$ terms. In this paper, we set $\psi_0=1$ with no loss of generality.

The notations $\sigma_2,\ \sigma_3$ in the definition of affine Yangian are functions of three complex numbers $h_1, h_2$ and $h_3$:
\beaa
\sigma_1 &=& h_1+h_2+h_3=0,\\
\sigma_2 &=& h_1 h_2 + h_1 h_3 + h_2 h_3,\\
 \sigma_3 &=& h_1 h_2 h_3.
\eeaa

This affine Yangian has the representation on 3D Young diagrams or Plane partitions. A plane partition $\pi$ is a 2D Young diagram in the first quadrant of plane $xOy$ filled with non-negative integers that form nonincreasing rows and columns  \cite{FW, ORV}. The number in the position $(i,j)$ is denoted by $\pi_{i,j}$
\[
\left( \begin{array}{ccc}
\pi_{1,1} & \pi_{1,2} &\cdots \\
\pi_{2,1} & \pi_{2,2} &\cdots \\
\cdots & \cdots &\cdots \\
\end{array} \right).
\]
 The integers $\pi_{i,j}$ satisfy
\[
\pi_{i,j}\geq \pi_{i+1,j},\quad \pi_{i,j}\geq \pi_{i,j+1},\quad \lim_{i\rightarrow \infty}\pi_{i,j}=\lim_{j\rightarrow \infty}\pi_{i,j}=0
\]
for all integers $i,j\geq 0$. Piling $\pi_{i,j}$ cubes over position $(i, j)$ gives a 3D Young diagram. 3D Young diagrams arose naturally in the melting crystal model\cite{ORV,NT}. We always identify 3D Young diagrams with plane
partitions as explained above. For example, the 3D Young diagram
$\twoboxy$
can also be denoted by the plane partition
$(1,1)$.

 As in our paper \cite{3DFermionYangian}, we use the following notations. For a 3D Young diagram $\pi$, the notation $\Box\in \pi^+$ means that this box is not in $\pi$ and can be added to $\pi$. Here ``can be added'' means that when this box is added, it is still a 3D Young diagram. The notation $\Box\in \pi^-$ means that this box is in $\pi$ and can be removed from $\pi$. Here ``can be removed" means that when this box is removed, it is still a 3D Young diagram. For a box $\Box$, we let
\begin{equation}\label{epsilonbox}
h_\Box=h_1y_\Box+h_2x_\Box+h_3z_\Box,
\end{equation}
where $(x_\Box,y_\Box,z_\Box)$ is the coordinate of box $\Box$ in coordinate system $O-xyz$. Here we use the order $y_\Box,x_\Box,z_\Box$ to match that in paper \cite{Pro}.

Following \cite{Pro,Tsy}, we introduce the generating functions:
\begin{eqnarray}
e(u)&=&\sum_{j=0}^{\infty} \frac{e_j}{u^{j+1}},\nonumber\\
f(u)&=&\sum_{j=0}^{\infty} \frac{f_j}{u^{j+1}},\\
\psi(u)&=& 1 + \sigma_3 \sum_{j=0}^{\infty} \frac{\psi_j}{u^{j+1}},\nonumber
\end{eqnarray}
where $u$ is a parameter.
Introduce
\begin{equation}\label{psi0}
\psi_0(u)=\frac{u+\sigma_3\psi_0}{u}
\end{equation}
and
\begin{eqnarray} \label{dfnvarphi}
\varphi(u)=\frac{(u+h_1)(u+h_2)(u+h_3)}{(u-h_1)(u-h_2)(u-h_3)}.
\end{eqnarray}
For a 3D Young diagram $\pi$, define $\psi_\pi(u)$ by
\begin{eqnarray}\label{psipiu}
\psi_\pi(u)=\psi_0(u)\prod_{\Box\in\pi} \varphi(u-h_\Box).
\end{eqnarray}
In the following, we recall the representation of the affine Yangian on 3D Young diagrams as in paper \cite{Pro} by making a slight change. The representation of affine Yangian on 3D Young diagrams is given by
\begin{eqnarray}
\psi(u)|\pi\rangle&=&\psi_\pi(u)|\pi\rangle,\\
e(u)|\pi\rangle&=&\sum_{\Box\in \pi^+}\frac{E(\pi\rightarrow\pi+\Box)}{u-h_\Box}|\pi+\Box\rangle,\label{eupi}\\
f(u)|\pi\rangle&=&\sum_{\Box\in \pi^-}\frac{F(\pi\rightarrow\pi-\Box)}{u-h_\Box}|\pi-\Box\rangle\label{fupi}
\end{eqnarray}
where $|\pi\rangle$ means the state characterized by the 3D Young diagram $\pi$ and the coefficients
\begin{equation}\label{efpi}
E(\pi \rightarrow \pi+\square)=-F(\pi+\square \rightarrow \pi)=\sqrt{\frac{1}{\sigma_3} \operatorname{res}_{u \rightarrow h_{\square}} \psi_\pi(u)}
\end{equation}
 Equations (\ref{eupi}) and (\ref{fupi}) mean generators $e_j,\ f_j$ acting on the 3D Young diagram $\pi$ by
\begin{equation}
\begin{aligned}
e_j|\pi\rangle &=\sum_{\square \in \pi^{+}} h_{\square}^j E(\pi \rightarrow \pi+\square)|\pi+\square\rangle,
\end{aligned}
\end{equation}
\begin{equation}
\begin{aligned}
f_j|\pi\rangle &=\sum h_{\square}^j F(\pi \rightarrow \pi-\square)|\pi-\square\rangle .
\end{aligned}
\end{equation}

 The triangular decomposition of affine Yangian $\mathcal{Y}$ is
  \be
 \mathcal{Y}=\mathcal{Y}^+\oplus\mathcal{B}\oplus\mathcal{Y}^-
  \ee
where $\mathcal{Y}^+$ is the subalgebra generated by generators $e_j$ with relations (\ref{yangian1}) and (\ref{yangian8}), $\mathcal{B}$ is the commutative subalgebra with generators $\psi_j$,  $\mathcal{Y}^-$ is the subalgebra generated by generators $f_j$ with relations (\ref{yangian2}) and (\ref{yangian9}).

Define the anti-automorphism $\tilde{a}$ by
\be
\tilde{a}(e_j)=-f_j
\ee
The quadratic form  on $\mathcal{Y}^+|0\rangle$ is defined by
\bea\label{quadraticform}
\tilde {B}(x|0\rangle, y|0\rangle)=\langle 0|\tilde a (y)x|0\rangle
\eea
where $x, y\in\mathcal{Y}^+$. Note that the quadratic form here is different from the Shapovalov form in \cite{Pro}.
For 3D Young diagrams $\pi,\pi'$ and let $\pi=x|0\rangle, \pi'=y|0\rangle$ for $x, y\in\mathcal{Y}^+$, define
the orthogonality
\be\label{orthogonality}
\langle\pi',\pi\rangle=\langle 0|\tilde a (y)x|0\rangle.
\ee

In the following, we calculate some properties of affine Yangian which have close relation with 3D Bosons. We want to have the fact that 3D Bosons will become 2D Bosons when we require $a_{n,k}=0, k>1$ and $a_{n,1}=a_n$.  As in paper \cite{Pro}, when $n>0$, $a_{n,1}$ and $a_{-n,1}$ are defined to be
\bea\label{2Dboson}
a_{n,1}:=-\frac{1}{(n-1)!}\text{ad}_{f_1}^{n-1} f_0,\\
a_{-n,1}:=\frac{1}{(n-1)!}\text{ad}_{e_1}^{n-1} e_0.
\eea

The set
\[
\{\text{ad}_{e_1}^{n-1} e_0, n=1,2,3,\cdots\}
\]
is denoted by $\diamondsuit$. That the operator $A$ commutes with $\diamondsuit$ means $A$ commute with every element in $\diamondsuit$.
\begin{prp}\label{communicationAEA}
If the operator $A$ commutes with $\diamondsuit$, so does $[e_1, A]$.
\end{prp}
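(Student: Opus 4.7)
The plan is a one-line application of the Jacobi identity, once we notice that $\diamondsuit$ is stable under $\mathrm{ad}_{e_1}$.

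Write $X_n := \mathrm{ad}_{e_1}^{n-1} e_0$, so that $\diamondsuit=\{X_n : n\geq 1\}$. The first thing I would record is the identity
\[
[e_1, X_n] \;=\; \mathrm{ad}_{e_1} \mathrm{ad}_{e_1}^{n-1} e_0 \;=\; \mathrm{ad}_{e_1}^{n} e_0 \;=\; X_{n+1},
\]
which says that bracketing with $e_1$ sends each element of $\diamondsuit$ to the next element of $\diamondsuit$; in particular $[e_1, X_n]\in\diamondsuit$ for every $n\geq 1$.

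Given an operator $A$ commuting with every $X_m$, I would then apply the Jacobi identity in the form $[[e_1,A],X_n] = [e_1,[A,X_n]] - [A,[e_1,X_n]]$. The first term vanishes because $[A,X_n]=0$ by hypothesis, and the second term vanishes because $[e_1,X_n]=X_{n+1}\in\diamondsuit$ and $A$ also commutes with $X_{n+1}$. Hence $[[e_1,A],X_n]=0$ for all $n\geq 1$, which is exactly the statement that $[e_1,A]$ commutes with $\diamondsuit$.

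There is essentially no obstacle here: the argument uses only the Jacobi identity and the tautological fact that $\mathrm{ad}_{e_1}$ shifts $X_n\mapsto X_{n+1}$. The only place one has to be a little careful is the sign convention in the Jacobi identity; writing it as $[e_1,[A,X_n]] = [[e_1,A],X_n] + [A,[e_1,X_n]]$ makes the cancellation transparent and avoids sign errors.
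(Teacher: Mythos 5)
Your proof is correct and is essentially the same as the paper's: both rest on the Jacobi identity together with the observation that $\mathrm{ad}_{e_1}$ sends $\mathrm{ad}_{e_1}^{n-1}e_0$ to $\mathrm{ad}_{e_1}^{n}e_0$, so both commutators appearing in the identity vanish by hypothesis. Your write-up just makes the stability of $\diamondsuit$ under $\mathrm{ad}_{e_1}$ explicit, which the paper leaves implicit.
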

\begin{proof}
That $A$ commutes with $\diamondsuit$ means $[A,\text{ad}_{e_1}^{n-1} e_0]=0$ for $n=1,2,3,\cdots$. By Jacobi identity,
\beaa
[[e_1, A],\text{ad}_{e_1}^{n-1} e_0]=-[[A,\text{ad}_{e_1}^{n-1} e_0], e_1]-[[\text{ad}_{e_1}^{n-1} e_0, e_1], A]=0.
\eeaa
\end{proof}
Then we get the following result.
\begin{prp}
If the operator $A$ commutes with $\diamondsuit$, then we get that $\text{ad}_{e_1}^{n-1} A$ for $n=1,2,3,\cdots$ commute with $\diamondsuit$.
\end{prp}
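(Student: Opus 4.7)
The plan is straightforward induction on $n$, using the previous Proposition \ref{communicationAEA} as the inductive step.

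For the base case $n=1$, we have $\text{ad}_{e_1}^{0} A = A$, which commutes with $\diamondsuit$ by hypothesis. For the inductive step, I would suppose that $\text{ad}_{e_1}^{k-1} A$ commutes with $\diamondsuit$ for some $k\geq 1$, and then set $B := \text{ad}_{e_1}^{k-1} A$. Since $B$ commutes with $\diamondsuit$, Proposition \ref{communicationAEA} applied to $B$ in place of $A$ immediately yields that $[e_1,B] = \text{ad}_{e_1}^{k} A$ also commutes with $\diamondsuit$, which closes the induction.

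There is no real obstacle here; the entire content of the statement is just iterated application of the preceding proposition. The only thing to be careful about is identifying $\text{ad}_{e_1}^{n-1}A$ with $[e_1, \text{ad}_{e_1}^{n-2}A]$ so that the previous proposition can be invoked at each step. No further use of the Yangian relations, the Jacobi identity, or the particular structure of $\diamondsuit$ is needed beyond what was already absorbed into Proposition \ref{communicationAEA}.
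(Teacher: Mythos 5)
Your induction is correct and is exactly the argument the paper intends: the proposition is stated as an immediate consequence of Proposition \ref{communicationAEA} ("Then we get the following result"), obtained by iterating that proposition, which is precisely your inductive step. Nothing further is needed.
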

By similar calculation, we can get that if the operator $B$ commutes with $\text{ad}_{e_1}^{n-1} e_0$ and $\text{ad}_{e_1}^{n-1} A$ for $n=1,2,3,\cdots$, so do $\text{ad}_{e_1}^{n-1} B$ for $n=1,2,3,\cdots$.

\section{3D Bosons}\label{sect3}
Introduce the space of all polynomials
\[
\C[p]=\C[p_1,p_{2,1},p_{2,2},p_{3,1},p_{3,2},p_{3,3},\cdots].
\]
Every polynomial is a function of infinitely many variables $$p=(p_1,p_{2,1},p_{2,2},p_{3,1},p_{3,2},p_{3,3},\cdots),$$
but each polynomial itself is a finite sum of monomials, so involves only finitely many of the variables.

Define the weight of $p_{n,k}$ to be $n$, and the weight of monomial $p_{n_1,k_1}^{l_1}\cdots p_{n_s,k_s}^{l_s}$ to be $l_1 n_1+\cdots +l_s n_s$, then the 3D Bosonic Fock space is written into
\be
\C[p]=\bigoplus_{n=0}^\infty \C[p]_n
\ee
where $\C[p]_n$ is the space of polynomials of weight $n$, which is a subspace of $\C[p]$. The basis of $\C[p]_0$ is $1$, the basis of $\C[p]_1$ is $p_1$, the basis of $\C[p]_2$ is $p_1^2, p_{2,1}, p_{2,2}$, the basis of $\C[p]_3$ is $p_1^3, p_1p_{2,1}, p_1p_{2,2}, p_{3,1}, p_{3,2}, p_{3,3}$, we can write the basis of every subspace $\C[p]_n$.  By the results in \cite{YZ}, we know that the elements in the basis of $\C[p]_n$ is in one to one correspondence with 3D Young diagrams of total box number $n$. Then we have
\be
\sum_{n=0}^\infty \text{dim} (\C[p]_n)q^n=\prod_{n=1}^\infty\frac{1}{(1-q^n)^n}
\ee
where the notation $\text{dim} (\C[p]_n)$ means the dimension of the vector space $\C[p]_n$.

Define the form $\langle \cdot,\cdot\rangle$ on $\C[p]$ by $\langle p_{n,j},p_{m,i}\rangle=0$ unless $n=m,\ i=j$. When $i=j=1$, $\langle p_{n,1},p_{m,1}\rangle$ equal $\langle p_{n},p_{m}\rangle$ in the 2D Bosons. Define
\be
\langle p_{j,j},p_{j,j}\rangle=(-1)^{j-1} j!\prod_{k=1}^{j-1}(k^3+k^2\sigma_2+\sigma_3^2)
\ee
and
\be
\langle p_{n,j},p_{n,j}\rangle=\left(\begin{array}{cc}n+j-1\\n-j\end{array}\right)\langle p_{j,j},p_{j,j}\rangle.
\ee

Consider operators $b_{n,k}$, where $n\in\Z, n\neq 0$, and $k\in \Z, 0<k\leq |n|$, with the commutation relations
\be\label{3dbosonrel}
[b_{m,l},b_{n,k}]=m\delta_{m+n,0}\delta_{l,k}\langle p_{l,|m|},p_{l,|m|}\rangle.
\ee
The operators $b_{n,k}$ with the relations above are called 3D Boson, and the algebra generated by $b_{n,k}$ with relations (\ref{3dbosonrel}) is called 3D Heisenberg algebra, which is denoted by $\mathfrak{B}$. Using the commutation relations (\ref{3dbosonrel}), we can see that any element in 3D Heisenberg algebra can be expressed in a unique way as a linear combination of the following elements:
\[
b_{-m_1,l_1}^{\alpha_1}\cdots b_{-m_s,l_s}^{\alpha_s}b_{n_1,k_1}^{\beta_1}\cdots b_{n_t,k_t}^{\beta_t}
\]
for
\[
(-m_1,l_1)<\cdots <(-m_s,l_s),\ \ (n_1,k_1)<\cdots<(n_t,k_t),\ \ \alpha_i,\beta_i=1,2,\cdots,
\]
where the notation $(m,l)<(n,k)$ means $m<n$ or $m=n, l<k$.

Define a linear map $\rho:\mathfrak{B}\rightarrow \text{End}(\C[p])$ by
\bea
\rho (b_{-n,k})= p_{n,k},\ \ \ \rho (b_{n,k})=\langle p_{l,|m|},p_{l,|m|}\rangle n\frac{\partial}{ \partial p_{n,k}},
\eea for $n>0$,
which gives a representation of the Heisenberg algebra $\mathfrak{B}$ on polynomial space $\C[p]$. The representation space $\C[p]$ is called the 3D Bosonic Fock space. We call the operators $b_{n,k}$ annihilation operators and $b_{-n,k}$ creation operators for $n>0$. From the commutation relations (\ref{3dbosonrel}), it is easy to find that all the creation operators commute among themselves, so do all the annihilation operators. The element $1\in\C[p]$ is called the vacuum state. Every annihilation operators kill the vacuum state, that is, $n\frac{\partial}{ \partial p_{n,k}}1=0$. The 3D Bosonic Fock space is generated by the vacuum state:
\be
\C[p]=\mathfrak{B}\cdot 1:=\{b\cdot 1|b\in \mathfrak{B}\}.
\ee

We give a remark to explain why we use ``3D" in the 3D Boson and 3D Heisenberg algebra. 3D is used to distinguish 3D Boson and ordinary Boson, 3D Heisenberg algebra and ordinary Heisenberg algebra. Similar to that the ordinary Bosonic Fock space is isomorphic to the space of 2D Young diagrams, the 3D Bosonic Fock space is isomorphic to the space of 3D Young diagrams, this is the reason we use the notation 3D.

In the following, we consider the algebra $W$ with the generators $a_{n,k},$  ($n\in\Z, n\neq 0$, and $k\in \Z, 0<k\leq |n|$). We let $a_{n,1}=b_{n,1}$ and $a_{-2,2}=b_{-2,2}$. The commutation relations in $W$ are
\bea
[a_{n,1},a_{m,1}]&=&\psi_0 n\delta_{n+m,0},
\eea
\bea
[a_{n,1}, a_{m,j}]&=& 0,\ \ \text{when} \ \ j>1,
\eea
and when $j>1,\ k>1$,
\bea\label{anjank}
[a_{m,j},a_{n,k}]=\sum\limits_{\substack{0\leq l\leq j+k-2 \\ j+k-l\text{even}}}\frac{j!k!}{l!}C_{jk}^lN_{jk}^l(m,n)a_{m+n,l},
\eea
where the coefficients $N_{jk}^l(m,n)$ are
\bea
N_{jk}^0(m,n)&=&\left(\begin{array}{cc}m+j-1\\j+k-1\end{array}\right)\delta_{m+n,0},\\
N_{jk}^l(m,n)&=&\sum_{s=0}^{j+k-l-1}\frac{(-1)^s}{(j+k-l-1)!(2l)_{j+k-l-1}}\left(\begin{array}{cc}j+k-l-1\\s\end{array}\right)\times\nn\\
&&[j+m-1]_{j+k-l-1-s}[j-m-1]_{s}[k+n-1]_{s}[k-n-1]_{j+k-l-1-s},\nn
\eea
and the structure constants $C_{jk}^l$ are
\bea
C_{jk}^0&=&\frac{(j-1)!^2(2j-1)!}{4^{j-1}(2j-1)!!(2j-3)!!}\delta_{jk}c_{j},\\
C_{jk}^l&=&\frac{1}{2\times 4^{j+k-l-2}}(2l)_{j+k-l-1}\times {}_4F_3\left(\begin{array}{cc}\frac{1}{2},\frac{1}{2},-\frac{1}{2}(j+k-l-2),-\frac{1}{2}(j+k-l-1)\\ \frac{3}{2}-j,\frac{3}{2}-k,\frac{1}{2}+l\end{array};1\right),\nn
\eea
with
\bea
(a)_n&=&a(a+1)\cdots (a+n-1),\\
{[a]}_n&=&a(a-1)\cdots (a-n+1),\\
{}_mF_n\left(\begin{array}{cc}a_1,\cdots,a_m\\ b_1,\cdots,b_n\end{array};z\right)&=&\sum_{k=0}^\infty\frac{(a_1)_k\cdots (a_m)_k}{(b_1)_k\cdots (b_n)_k}\frac{z^k}{k!}.
\eea
We give a remark here to explain the central charges. The central charge of $a_{n,1}$ is $\psi_0$, we always take $\psi_0=1$ without loss the symmetry of affine Yangian of $\mathfrak{gl}(1)$ about three coordinate axes. The central charge $c_j$ of $a_{n,j}$ is dependent on $j$. The first few of them are
\beaa
c_{2}&=& -2 (1+\sigma_2+\sigma_3^2),\\
c_{3}&=& 6 (1+\sigma_2+\sigma_3^2)(8+4\sigma_2+\sigma_3^2),\\
c_4&=& -144 \frac{(1+\sigma_2+\sigma_3^2)(8+4\sigma_2+\sigma_3^2)(27+9\sigma_2+\sigma_3^2)(-1+\sigma_2+\sigma_3^2)}{(-17+5\sigma_2+5\sigma_3^2)}.
\eeaa
Define
\[
\kappa(n)=-(n+h_1h_2)(n+h_1h_3)(n+h_2h_3)=-(n^3+n^2\sigma_2+\sigma_3^2),
\]
then
\[
c_2=2\kappa(1),\ c_{3}=6\kappa(1)\kappa(2),
\]
which match $\langle P_{2,2},P_{2,2}\rangle$ and $\langle P_{3,3},P_{3,3}\rangle$ in \cite{3-schur} respectively, and
\[
c_4=144\frac{ \kappa(1)\kappa(2)\kappa(3)\kappa(-1)}{5\kappa(1)+22},
\]
which matches $\langle P_{4,4},P_{4,4}\rangle$ in \cite{WLin}. Here we choose $P_{4,4}=E_{13}|0\rangle$ in \cite{WLin}.

For $n>0$, define $P_{n,j}$ be the representation of $a_{-n,j}$ on 3D Young diagrams,
then the 3-Jack polynomials as vectors in the representation space are functions of $P_{n,j}$. Note that the set $\{P_{n,j}\}$ are related with $\{p_{n,j}\}$. For example, $P_{n,1}=p_{n,1}$ and $P_{2,2}=p_{2,2}$.
\section{3D Bosons in affine Yangian of ${\mathfrak{gl}}(1)$}\label{sect4}
We denote $[e_i,e_j]$ by $e_{i,j}$. We have the recurrence relation: when $j+k=2n$ in (\ref{yangian1}),
\beaa
e_{n+2,n+1}&=&\frac{1}{2n+3}(e_{2n+3,0}+\sigma_2\sum_{j=0}^n(j+1)e_{2n+1-j,j}-\\
&&\sigma_3\sum_{j=0}^{n-1}(1+2+\cdots+(j+1))\{e_{2n-j},e_j\}-\\
&&(1+2+\cdots+(n+1)\sigma_3e_ne_n)),
\eeaa
the right hand side is denoted by $\blacktriangle$, then
\beaa
e_{n+3+i,n-i}&=&(2i+3)\blacktriangle+(-\sigma_2)\sum_{j=0}^i(j+1)e_{n+1+i-j,n-i+j}+\\
&&\sigma_3\sum_{j=0}^{i-1}(1+2+\cdots+(j+1))\{e_{n+i-j},e_{n-i+j}\}+\\
&&(1+2+\cdots+(i+1)\sigma_3e_ne_n))
\eeaa
where $i=0,1,\cdots,n-1$.
 When $j+k=2n+1$ in (\ref{yangian1}),
\beaa
e_{n+3,n+1}&=&\frac{1}{n+2}(e_{2n+4,0}+\sigma_2\sum_{j=0}^n(j+1)e_{2n+2-j,j}-\\
&&\sigma_3\sum_{j=0}^{n}(1+2+\cdots+(j+1))\{e_{2n+1-j},e_j\},
\eeaa
the right hand side is denoted by $\blacktriangledown$, then
\beaa
e_{n+4+i,n-i}&=&(i+2)\blacktriangledown+(-\sigma_2)\sum_{j=0}^i(j+1)e_{n+2+i-j,n-i+j}+\\
&&\sigma_3\sum_{j=0}^{i}(1+2+\cdots+(j+1))\{e_{n+1+i-j},e_{n-i+j}\},
\eeaa
where $i=0,1,\cdots,n-1$.

We rewrite the recurrence relation in the matrix form
\begin{prp}When $j+k=2n$ in (\ref{yangian1}),
\bea
\left(\begin{array}{cccc} e_{2n+2,1}\\e_{2n+1,2}\\e_{2n,3}\\\cdots\\e_{n+3,n}\\e_{n+2,n+1}\end{array}\right)&=&-\sigma_2 (A_{n+1}^{-1}(I_{n+1}-J_{n+1}))_{\widehat{j_1}}\left(\begin{array}{cccc} e_{2n,1}\\e_{2n-1,2}\\e_{2n-2,3}\\\cdots\\e_{n+2,n-1}\\e_{n+1,n}\end{array}\right)+\sigma_3(A_{n+1}^{-1})_{\widehat{j_1,j_{n+1}}}\left(\begin{array}{cccc} e_{2n-1,1}\\e_{2n-2,2}\\e_{2n-3,3}\\\cdots\\e_{n+2,n-2}\\e_{n+1,n-1}\end{array}\right)\nn\\
&&+\frac{e_{2n+3,0}+\sigma_2e_{2n+1,0}-\sigma_3e_{2n,0}}{2n+3}\left(\begin{array}{cccc} 2n+1\\2n-1\\\cdots\\5\\3\\1\end{array}\right)+2\sigma_3 A_{n+1}^{-1}\left(\begin{array}{cccc} e_0e_{2n}\\e_1e_{2n-1}\\e_2e_{2n-2}\\ \cdots\\e_{n-1}e_{n+1}\\\frac{1}{2}e_ne_{n}\end{array}\right)
\eea

When $j+k=2n+1$ in (\ref{yangian1}),
\bea
&&\left(\begin{array}{cccc} e_{2n+3,1}\\e_{2n+2,2}\\e_{2n+1,3}\\\cdots\\e_{n+4,n}\\e_{n+3,n+1}\end{array}\right)=-\sigma_2 (B_{n+1}^{-1}(I_{n+1}-J_{n+1}))_{\widehat{j_1}}\left(\begin{array}{cccc} e_{2n,1}\\e_{2n-1,2}\\e_{2n-2,3}\\\cdots\\e_{n+2,n-1}\\e_{n+1,n}\end{array}\right)+\sigma_3(B_{n+1}^{-1})_{\widehat{j_1}}\left(\begin{array}{cccc} e_{2n-1,1}\\e_{2n-2,2}\\e_{2n-3,3}\\\cdots\\e_{n+2,n-2}\\e_{n+1,n-1}\end{array}\right)\nn\\
&&\ \ \ \ \ \ \ \ \ \ +\frac{e_{2n+4,0}+\sigma_2e_{2n+2,0}-\sigma_3e_{2n+1,0}}{n+2}\left(\begin{array}{cccc} n+1\\n\\\cdots\\3\\2\\1\end{array}\right)+2\sigma_3 B_{n+1}^{-1}\left(\begin{array}{cccc} e_0e_{2n+1}\\e_1e_{2n}\\e_2e_{2n-1}\\ \cdots\\e_{n-1}e_{n+2}\\e_ne_{n+1}\end{array}\right)
\eea
\end{prp}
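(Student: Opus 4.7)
The plan is to recognize that the proposition is a direct matricization of the scalar recurrences derived in the two paragraphs immediately preceding it. No new Yangian identity is required; the proof is a bookkeeping exercise that gathers $(n+1)$ scalar identities (indexed by $i=-1,0,1,\dots,n-1$) into a single vector equation, identifies the coefficient matrices $A_{n+1}$ and $B_{n+1}$ with their inverses, and explains the emergence of $I_{n+1}-J_{n+1}$ and the hat-deletion notation from the antisymmetry of commutators.

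First I would treat the even case $j+k=2n$. Stack the scalar identities for $i=0,1,\dots,n-1$ together with the defining expression for $e_{n+2,n+1}$ into a column vector $(e_{2n+2,1},e_{2n+1,2},\dots,e_{n+2,n+1})^{T}$. On each right-hand side, the term $(2i+3)\blacktriangle$ expands into $e_{2n+3,0}$, a $\sigma_2$-sum $\sum_{j=0}^{n}(j+1)e_{2n+1-j,j}$, a $\sigma_3$-sum of anticommutators $\{e_{2n-j},e_j\}$, and the diagonal contribution $e_{n}e_{n}$. The $\sigma_2$-sum mixes terms of top weight $2n+3$ (when $j=0$) with the lower-weight vector $(e_{2n,1},\dots,e_{n+1,n})^{T}$; moving the top-weight contributions to the left-hand side and collecting the coefficients against $i$ produces a square matrix which is exactly $A_{n+1}$. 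Inverting it yields the prefactor $A_{n+1}^{-1}$ in the statement, and the scalar piece $e_{2n+3,0}+\sigma_2 e_{2n+1,0}-\sigma_3 e_{2n,0}$ divided by $2n+3$ factors out cleanly, multiplied by the column $(2n+1,2n-1,\dots,3,1)^{T}$ inherited from the weights $(2i+3)$.

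Next I would handle the antisymmetry. When rewriting $\sum_{j=0}^{i}(j+1)\,e_{n+1+i-j,n-i+j}$ against the ordered vector $(e_{2n,1},\dots,e_{n+1,n})^{T}$, roughly half of the index pairs need to be flipped using $e_{a,b}=-e_{b,a}$; this is the origin of the factor $I_{n+1}-J_{n+1}$, where $J_{n+1}$ is the $(n+1)\times(n+1)$ reversal matrix. The hat-deletion $(\,\cdot\,)_{\widehat{j_1}}$ and $(\,\cdot\,)_{\widehat{j_1,j_{n+1}}}$ record the fact that the top-weight column (and, for the $\sigma_3$-sum, also the diagonal $e_n e_n$ column) has been absorbed into the LHS already, so the corresponding column of $A_{n+1}^{-1}$ must be dropped. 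The quadratic block $2\sigma_3 A_{n+1}^{-1}(e_0 e_{2n},\dots,\tfrac12 e_n e_n)^{T}$ then gathers the anticommutator contributions, with the factor $\tfrac12$ at the end accounting for the doubled counting of $\{e_n,e_n\}$.

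The odd case $j+k=2n+1$ is completely parallel: the only changes are that the weight vector becomes $(n+1,n,\dots,2,1)^{T}$ (coming from the prefactors $(i+2)$ in place of $(2i+3)$), the pivot scalar becomes $\frac{1}{n+2}(e_{2n+4,0}+\sigma_2 e_{2n+2,0}-\sigma_3 e_{2n+1,0})$, and the matrix built from the $(i+2)$-weighted rows is named $B_{n+1}$. Here no $e_n e_n$ diagonal term appears, so only one column is deleted in the $\sigma_3$-block, hence $(B_{n+1}^{-1})_{\widehat{j_1}}$ rather than $(\,\cdot\,)_{\widehat{j_1,j_{n+1}}}$. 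The main obstacle is purely combinatorial: tracking the shifts of indices $(j\leftrightarrow i-j)$ consistently so that the correct entries of $A_{n+1}^{-1}$ and $B_{n+1}^{-1}$ survive the deletion, and verifying that $A_{n+1}$ and $B_{n+1}$ are invertible. Invertibility is automatic, however, because the scalar recurrence itself already solves uniquely for each $e_{n+3+i,n-i}$, so the matricized system can be inverted term by term.
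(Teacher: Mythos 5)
Your high-level plan --- read the proposition as a matricization of the scalar recurrences displayed just before it --- is exactly what the paper does; the paper supplies no further argument, and it also never defines $A_{n+1}$, $B_{n+1}$, $J_{n+1}$, or the hat-deletion notation, so any honest proof has to begin by supplying those definitions. Your reconstruction of the bookkeeping, however, has two load-bearing steps that do not hold up. First, you attribute the factor $I_{n+1}-J_{n+1}$ to antisymmetry flips $e_{a,b}=-e_{b,a}$, claiming roughly half the index pairs must be reversed. In fact every commutator occurring in the $\sigma_2$-sums of the displayed recurrences, namely $e_{2n+1-j,j}$ for $0\le j\le n$ and $e_{n+1+i-j,\,n-i+j}$ for $0\le j\le i\le n-1$, already has its first index strictly larger than its second, so no flips occur anywhere. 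The far more plausible source of $I_{n+1}-J_{n+1}$ is the pair of terms $+\sigma_2[e_{j+1},e_k]-\sigma_2[e_j,e_{k+1}]$ already present in relation (\ref{yangian1}): at fixed total weight these form a difference of two consecutive entries of the weight-$(2n+1)$ vector, i.e.\ $(I-J)$ with $J$ acting as a shift, not as the reversal you posit.

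Second, your mechanism for producing $A_{n+1}^{-1}$ is circular. You propose to start from the scalar recurrences, which already express each $e_{n+3+i,n-i}$ explicitly in terms of $e_{2n+3,0}$ and lower-weight data; at that point there is no linear system left to invert, and ``moving the top-weight contributions to the left-hand side'' moves nothing, since the only top-weight object on the right is $e_{2n+3,0}$, which remains on the right in the proposition as well. The inverse matrix can only arise if you instead matricize the unsolved relations (\ref{yangian1}) at fixed $j+k=2n$ (respectively $2n+1$): the four leading terms $[e_{j+3},e_k]-3[e_{j+2},e_{k+1}]+3[e_{j+1},e_{k+2}]-[e_j,e_{k+3}]$ give a banded coefficient matrix with entries $1,-3,3,-1$ acting on the vector of independent weight-$(2n+3)$ commutators, and $A_{n+1}$ (respectively $B_{n+1}$) must be this matrix after the column belonging to $e_{2n+3,0}$ is separated off; inverting it is what produces the proposition and the scalar recurrences simultaneously. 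Without this step your bookkeeping cannot identify $A_{n+1}$, and your invertibility argument (``the scalar recurrence already solves uniquely'') is not available, because that scalar solution is itself the output of the inversion you are trying to justify.
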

\begin{cor}
The vector space spanned by $e_ie_j$ for $i,j=0,1,2,\cdots$, where $e_i$ are the generators of affine Yangian, has a basis
\be
e_{n,0}, \ e_me_n \ ( m\leq n)
\ee
for $m,n=0,1,2,\cdots$.
\end{cor}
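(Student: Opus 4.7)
The plan is to deduce the corollary from the matrix recurrences of the previous proposition, arguing weight by weight, where $w = i+j$ denotes the weight of a product $e_i e_j$ or a commutator $e_{i,j}$.

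\textbf{Spanning.} If $i \le j$, then $e_i e_j$ already appears among the proposed basis vectors. If $i > j$, I use $e_i e_j = e_j e_i + e_{i,j}$, reducing the task to expressing the commutator $e_{i,j}$ with $i > j$. When $j = 0$, the element $e_{i,0}$ is itself listed. When $j \ge 1$, I appeal to the matrix equations of the preceding proposition, which solve the column of ``unbalanced'' commutators $e_{2n+2,1}, e_{2n+1,2},\ldots, e_{n+2,n+1}$ at weight $w = 2n+3$ (and the analogous column for $w = 2n+4$) as a linear combination of three kinds of contributions: a few $e_{k,0}$, each of which is either a basis element or reduces via $e_{k,0} = e_k e_0 - e_0 e_k$; commutators of strictly smaller weight, controlled by the induction hypothesis; and anticommutators $\{e_a,e_b\} = e_a e_b + e_b e_a$ whose two summands are basis products after sorting the indices. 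Inducting on $w$ closes the spanning step.

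\textbf{Linear independence.} A graded dimension count does the job. At weight $w \ge 1$, the ambient space of length-two monomials $e_i e_j$ with $i+j = w$ is $(w+1)$-dimensional, while the proposed basis contributes $\lfloor w/2 \rfloor + 2$ vectors (namely $e_{w,0}$ together with the sorted products). The relation $(\ref{yangian1})$, indexed by pairs $(j,k)$ with $j+k = w-3$ modulo the symmetry $(j,k) \leftrightarrow (k,j)$, supplies exactly $\lceil (w-2)/2 \rceil = w - 1 - \lfloor w/2 \rfloor$ identities among the top-weight terms, once lower-weight contributions have been absorbed inductively. Invertibility of the transition matrices $A_{n+1}, B_{n+1}$ of the preceding proposition is precisely the statement that these identities cut the ambient space down to the proposed basis, with no residual relations among the basis elements themselves.

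The main obstacle is ensuring that the Serre relation $(\ref{yangian8})$, or in principle any higher-depth Yangian identity, does not secretly impose an additional relation among length-two monomials. Because $(\ref{yangian8})$ is homogeneous of degree three in the generators $e_j$, it only produces identities among triple products and cannot constrain the length-two span. Combined with the spanning argument, the matching dimension count, and the invertibility of the recurrence matrices, this yields the basis claim.
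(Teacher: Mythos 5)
Your argument is correct and matches the paper's intent: the corollary is presented as an immediate consequence of the preceding matrix recurrences, which give exactly your spanning step (every $e_{i,j}$ with $i>j\ge 1$ reduces, by induction on the index sum, to the elements $e_{k,0}$, lower-weight commutators, and sorted products). The paper offers no argument for linear independence at all, so your dimension count --- resting on the invertibility of $A_{n+1},B_{n+1}$ and on the observation that the length-two part of the defining ideal is spanned by the relations (\ref{yangian1}) alone, the Serre relations (\ref{yangian8}) being of length three --- is a more complete rendering of the same route rather than a different one.
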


Suppose the vector
\[
\sum_{n\geq 0}d_ne_{n,0}+\sum_{m\leq n}b_{m n}e_me_n
\]
with the coefficients $d_n,\ b_{mn}$, be commutative with Bosons $a_{n,1}$, we only find the zero coefficients, that is, $d_n=0$ and $b_{mn}$=0. But we found the vector
\be
[e_2,e_0]-\sigma_3 [e_1,e_0]-e_0^2-2\sum_{n=1}^\infty a_{-(n+2),1}a_{n,1}
\ee
commute with Bosons $a_{n,1}$, which is denoted by $a_{-2,2}$ or $p_{2,2}$ in the representation on 3D Young diagrams. The communication $[a_{-2,2},a_{-n,1}]=0,\ n>0$ holds since
\beaa
&&[[e_2,e_0]-\sigma_3 [e_1,e_0], a_{-n,1}]=2na_{-{(n+2)},1},\\
&&[\sum_{m=1}^\infty a_{-(m+2),1}a_{m,1}, a_{-n,1}]=na_{-{(n+2)},1}.
\eeaa
The communication $[a_{-2,2},a_{n,1}]=0, n>0$ holds since
\beaa
&&[[e_2,e_0]-\sigma_3 [e_1,e_0], a_{-n,1}]=-2na_{{(n-2)},1},\\
&&[\sum_{m=1}^\infty a_{-(m+2),1}a_{m,1}, a_{n,1}]=-na_{{(n-2)},1}.
\eeaa
For $n\geq 2$, define
\be
a_{-(n+1),2}=\frac{1}{(n-1)!}\text{ad}_{e_1}^{n-1}\left([e_2,e_0]-\sigma_3[e_1,e_0]\right)-\sum_{i+j=-(n+1)}:a_{i,1}a_{j,1}:.
\ee
Similarly, define
\be
a_{n+1,2}=-\frac{1}{(n-1)!}\text{ad}_{f_1}^{n-1}\left([f_2,f_0]-\sigma_3[f_1,f_0]\right)-\sum_{i+j=(n+1)}:a_{i,1}a_{j,1}:.
\ee
We denote the first term in $a_{n,2}$ by $2L_n$, and the second term by $2\bar{L}_{n}$. Then we have that $L_n$ and $\bar{L}_n$ separately satisfy the virasoro relations, that is,
\bea
[L_{m}, L_{n}]&=&(m-n) L_{m+n}+\frac{m^3-m}{12} \delta_{m+n,0}(-\sigma_2-\sigma_3^2),
\eea
and
\bea
[\bar{L}_{m}, \bar{L}_{n}]&=&(m-n) \bar{L}_{m+n}+\frac{m^3-m}{12} \delta_{m+n,0}.
\eea

By calculation, we can prove the following relations hold
\begin{prp}For $n\geq 2$,
\be\label{e1barL-}
\frac{1}{n-1}e_1\bar{L}_{-n}|0\rangle=\bar{L}_{-(n+1)}|0\rangle.
\ee
\end{prp}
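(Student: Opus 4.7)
The strategy is to compute both sides explicitly on the vacuum, using the definition of $\bar{L}_{-n}$ as a normal-ordered bilinear in the 2D Boson modes $a_{k,1}$. From the definition of $a_{n,2}$ displayed just above (\ref{e1barL-}), one reads off $2\bar{L}_{-n} = -\sum_{i+j=-n}:a_{i,1}a_{j,1}:$. Applied to the vacuum the terms with any positive index vanish (since $a_{k,1}$ with $k>0$ annihilates $|0\rangle$) and the index $0$ is not in the Heisenberg algebra, leaving the explicit quadratic expression
\[
\bar{L}_{-n}|0\rangle \;=\; -\tfrac{1}{2}\sum_{k=1}^{n-1} a_{-k,1}\,a_{-(n-k),1}|0\rangle,
\]
and analogously for $\bar{L}_{-(n+1)}|0\rangle$. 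Moreover $e_1|0\rangle=0$: the only box addable to the empty diagram is the origin, for which $h_\square = 0$, so $h_\square^1=0$ in the formula for $e_1|\emptyset\rangle$.

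The essential ingredient is the identity $[e_1, a_{-k,1}] = k\,a_{-(k+1),1}$, which falls out of the definition $a_{-k,1} = \frac{1}{(k-1)!}\text{ad}_{e_1}^{k-1} e_0$ in (\ref{2Dboson}). Combined with the commutativity of the creation modes among themselves, which follows from the Heisenberg relation $[a_{n,1},a_{m,1}] = \psi_0\,n\,\delta_{n+m,0}$ evaluated for $n,m$ of the same (negative) sign, this lets me treat $e_1$ as a derivation on the quadratic expression above. Applying Leibniz, each summand $a_{-k,1}a_{-(n-k),1}|0\rangle$ contributes
\[
k\,a_{-(k+1),1}a_{-(n-k),1}|0\rangle \;+\; (n-k)\,a_{-k,1}a_{-(n+1-k),1}|0\rangle.
\]

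Summing over $k=1,\ldots,n-1$ and reindexing so that every summand has the form $a_{-j,1}a_{-(n+1-j),1}|0\rangle$ with $1\le j\le n$, the total coefficient at the $j$-th term is $(j-1)+(n-j)=n-1$ uniformly: for interior indices both Leibniz pieces contribute, while at the boundary indices $j=1$ and $j=n$ only one of the two pieces is present but equals $n-1$ on the nose. Matching against $\bar{L}_{-(n+1)}|0\rangle = -\tfrac{1}{2}\sum_{j=1}^n a_{-j,1}a_{-(n+1-j),1}|0\rangle$ then yields $e_1\bar{L}_{-n}|0\rangle = (n-1)\bar{L}_{-(n+1)}|0\rangle$, which is (\ref{e1barL-}). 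I expect no genuine obstacle: the argument is a derivation-on-a-product calculation resting entirely on the two algebraic inputs above, with no need to invoke the deeper Serre or Virasoro relations. The only delicate point is the boundary bookkeeping noted above, and it works out because the linear factors $k$ and $n-k$ in the Leibniz contributions are precisely the two terms of the telescoping sum $(j-1)+(n-j)$.
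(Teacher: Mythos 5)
Your proof is correct and is exactly the ``by calculation'' the paper leaves implicit: the same Leibniz-rule computation with $[e_1,a_{-k,1}]=k\,a_{-(k+1),1}$ (the paper's own lemma, immediate from $a_{-k,1}=\frac{1}{(k-1)!}\mathrm{ad}_{e_1}^{k-1}e_0$) that the paper itself applies to the annihilation part $(\bar L_{-n})_+$ in the proof of the very next proposition, here applied to the creation part surviving on $|0\rangle$. The only quibble is your sign convention $2\bar L_{-n}=-\sum :a_{i,1}a_{j,1}:$, whereas the paper's later expression for $\bar L_{-2}$ shows it takes the opposite sign; since the claimed identity is linear in $\bar L$ on both sides, this is immaterial.
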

This result means
\begin{prp}For $n\geq 2$,
\be
a_{-n,2}|0\rangle=P_{n,2}|0\rangle=\frac{1}{(n-2)!}\underbrace{e_1\cdots e_1}_{n-2}P_{2,2}|0\rangle.
\ee
\end{prp}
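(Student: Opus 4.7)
The plan is to split $a_{-n,2}=2L_{-n}+2\bar L_{-n}$ and to verify that on the vacuum each piece satisfies the one-step recurrence
\[
X_{-n}|0\rangle\;=\;\frac{1}{n-2}\,e_1\,X_{-(n-1)}|0\rangle,\qquad n\ge 3,
\]
so that iteration starting from the trivial base case $n=2$ yields $X_{-n}|0\rangle=\frac{1}{(n-2)!}e_1^{\,n-2}X_{-2}|0\rangle$ and therefore $a_{-n,2}|0\rangle=\frac{1}{(n-2)!}e_1^{\,n-2}a_{-2,2}|0\rangle$. The claim is then immediate once one recalls that $P_{k,2}|0\rangle=a_{-k,2}|0\rangle$, since $P_{k,2}$ is by definition the representative of $a_{-k,2}$.

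For the $\bar L$-piece the required recurrence is exactly Proposition \ref{e1barL-} (with $n$ replaced by $n-1$), so no further computation is needed there.

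For the $L$-piece I would first note that $e_1|0\rangle=0$: the empty diagram has a unique addable box, the origin, whose coordinate vector is $(0,0,0)$, so $h_\square=h_1\cdot 0+h_2\cdot 0+h_3\cdot 0=0$, and the representation formula (\ref{eupi}) gives $e_1|0\rangle=h_\square E(0\to\square)|\square\rangle=0$. A short induction on $k$ then shows that for every operator $X$,
\[
(\mathrm{ad}_{e_1})^k X\,|0\rangle\;=\;e_1^k X\,|0\rangle,
\]
since every monomial in the binomial expansion of $(\mathrm{ad}_{e_1})^k X$ that ends in an $e_1$ dies on $|0\rangle$, leaving only the single term $e_1^k X$. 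Specialising this to $X=2L_{-2}=[e_2,e_0]-\sigma_3[e_1,e_0]$ with $k=n-2$, together with the defining relation $2L_{-n}=\frac{1}{(n-2)!}(\mathrm{ad}_{e_1})^{n-2}(2L_{-2})$, produces the desired closed form for $L_{-n}|0\rangle$.

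I do not foresee a genuine obstacle: the substantive input is Proposition \ref{e1barL-}, and the rest is essentially tautological given the definitions. The only item deserving a moment's care is the bookkeeping of the factorials — the definition of $L$ already contributes a factor $1/(n-2)!$, and iterating the $\bar L$-recurrence produces the same factor $1/(n-2)!$, so the two halves combine cleanly.
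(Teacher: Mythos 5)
Your proposal is correct and follows essentially the same route as the paper: the paper derives this proposition directly from the vacuum identity (\ref{e1barL-}) for the $\bar L$-piece, with the $L$-piece handled implicitly through its very definition as $\frac{1}{(n-2)!}\mathrm{ad}_{e_1}^{n-2}$ of $[e_2,e_0]-\sigma_3[e_1,e_0]$. You merely make explicit the step the paper leaves tacit, namely that $e_1|0\rangle=0$ (since $h_\square=0$ for the corner box) converts $\mathrm{ad}_{e_1}^{k}$ into left multiplication by $e_1^{k}$ on the vacuum, which is exactly the right justification.
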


From (\ref{yangian6}) and (\ref{yangian7}), we have
\be
[\psi_2, a_{n,1}]=-2n a_{n,1}
\ee
for any $n$. Then we have
\begin{lem}
For any $n\neq 0$,
\be
[e_1,a_{n,1}]=-na_{n-1,1}.
\ee
\end{lem}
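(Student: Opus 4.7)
The plan is to split into cases according to the sign of $n$. For $n<0$ the identity is immediate from the definition (\ref{2Dboson}), whereas for $n>0$ it follows by a short induction driven by the commutation relations $[e_1,f_1]=\psi_2$ from (\ref{yangian3}) and $[\psi_2,f_j]=-2f_j$ from (\ref{yangian7}).

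For $n=-m<0$ with $m\geq 1$, the definition $a_{-m,1}=\frac{1}{(m-1)!}\,\text{ad}_{e_1}^{m-1}e_0$ gives
\begin{equation*}
[e_1,a_{-m,1}]=\frac{1}{(m-1)!}\,\text{ad}_{e_1}^{m}e_0=m\cdot\frac{1}{m!}\,\text{ad}_{e_1}^{m}e_0=m\,a_{-(m+1),1},
\end{equation*}
which is precisely $-n\,a_{n-1,1}$. For $n\geq 2$ I introduce the auxiliary iterated brackets $T_k:=\text{ad}_{f_1}^{k}f_0$, so that $a_{k+1,1}=-T_k/k!$, and prove by induction on $k\geq 1$ that $[e_1,T_k]=-k(k+1)\,T_{k-1}$. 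Applying the Jacobi identity and $[e_1,f_1]=\psi_2$ produces
\begin{equation*}
[e_1,T_k]=[\psi_2,T_{k-1}]+[f_1,[e_1,T_{k-1}]].
\end{equation*}
Because $\psi_2$ acts as a derivation scaling every $f_j$ by $-2$, one gets $[\psi_2,T_{k-1}]=-2k\,T_{k-1}$; combining with the inductive hypothesis $[e_1,T_{k-1}]=c_{k-1}T_{k-2}$ yields the simple recursion $c_k=c_{k-1}-2k$. The base case $c_1=-2$ is computed directly using $[e_1,f_0]=\psi_1$ and $[f_1,\psi_1]=0$, which gives $[e_1,[f_1,f_0]]=-2f_0$. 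The recursion then solves to $c_k=-k(k+1)$, and converting $T_{n-1}$ back via $a_{n,1}=-T_{n-1}/(n-1)!$ produces $[e_1,a_{n,1}]=-n\,a_{n-1,1}$ for all $n\geq 2$.

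The main subtlety is the boundary case $n=1$: by $a_{1,1}=-f_0$ one has $[e_1,a_{1,1}]=-\psi_1$, and the formula $-n\,a_{n-1,1}$ would require interpreting $a_{0,1}$ as $\psi_1$. This is consistent with the representation on 3D Young diagrams, since a short expansion of $\psi_\pi(u)$ using $\sigma_1=0$ shows that $\psi_1$ annihilates every state $|\pi\rangle$, so the identity effectively reads $[e_1,a_{1,1}]=0$ on the Fock space. I expect this boundary issue, rather than the Jacobi-identity recursion itself, to be the only point requiring commentary.
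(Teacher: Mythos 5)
Your proof is correct and follows essentially the same route as the paper, which disposes of $n<0$ directly from the definition of $a_{-m,1}$ as $\frac{1}{(m-1)!}\mathrm{ad}_{e_1}^{m-1}e_0$ and handles $n>0$ by induction; you have simply supplied the details (the Jacobi-identity recursion $c_k=c_{k-1}-2k$ via $[e_1,f_1]=\psi_2$ and the derivation property of $\mathrm{ad}_{\psi_2}$) that the paper leaves implicit. Your observation about the boundary case $n=1$, where the bracket equals $-\psi_1$ and one must note that $\psi_1$ vanishes on the Fock module, is a legitimate point the paper's one-line proof passes over silently.
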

\begin{proof}
When $n<0$, it holds clearly. When $n>0$, it can be proved by induction.
\end{proof}
\begin{prp}
For $n\geq 1$,
\be
\frac{1}{(n-1)}[e_1,\bar{L}_{-n}]=\bar{L}_{-(n+1)}.
\ee
\end{prp}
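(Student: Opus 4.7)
The plan is to compute $[e_1,\bar{L}_{-n}]$ directly from the Sugawara-type expression for $\bar{L}_{-n}$ extracted from the definition of $a_{-(n+1),2}$, namely
\[
2\bar{L}_{-n}=\sum_{i+j=-n}:a_{i,1}a_{j,1}:,
\]
combined with the preceding Lemma giving $[e_1,a_{k,1}]=-ka_{k-1,1}$ for every $k\neq 0$. Since $[e_1,\cdot\,]$ is a derivation and the commutator $[e_1,a_{k,1}]$ is again proportional to a single mode, it passes through the normal-ordering symbol to yield
\[
[e_1,\,:a_{i,1}a_{j,1}:]=-i\,:a_{i-1,1}a_{j,1}:\,-\,j\,:a_{i,1}a_{j-1,1}:.
\]
The c-number shifts needed to re-normal-order the right-hand side cancel once summed over $i+j=-n$ by the antisymmetry of the canonical commutator $[a_{m,1},a_{l,1}]=m\delta_{m+l,0}$.

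Next I would reindex the two resulting sums to put both over $i'+j'=-n-1$, setting $i'=i-1$ in the first and $j'=j-1$ in the second. The combined coefficient of $:a_{i',1}a_{j',1}:$ then becomes
\[
-(i'+1)-(j'+1)=-(i'+j'+2)=n-1,
\]
so
\[
[e_1,\,2\bar{L}_{-n}]=(n-1)\sum_{i'+j'=-n-1}:a_{i',1}a_{j',1}:=2(n-1)\bar{L}_{-(n+1)},
\]
and dividing by $2(n-1)$ gives the claim.

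The main obstacle is the careful bookkeeping of the normal-ordering symbol and the treatment of the formally undefined ``zero mode'' $a_{0,1}$, which appears in the reindexed sums whenever $i=1$ or $j=1$. I would address this by adopting the convention $a_{0,1}:=0$, consistent with the Lemma being stated only for $n\neq 0$ and with $[e_1,a_{1,1}]=-a_{0,1}$ on the Fock space, so that any such phantom contributions automatically vanish. As an a posteriori consistency check, restricting the resulting operator identity to the vacuum and using $e_1|0\rangle=0$ recovers exactly the preceding proposition on $|0\rangle$, which both verifies the sign and fixes the normalization in the Sugawara formula I started from.
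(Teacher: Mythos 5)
Your proof is correct, and it takes a slightly but genuinely different route from the paper's. The paper first splits $\bar{L}_{-n}$ into its pure-creation part and its part $(\bar{L}_{-n})_+=\sum_{k\geq 1}a_{-(n+k),1}a_{k,1}$ containing annihilation operators; it disposes of the former by invoking the preceding proposition $\frac{1}{n-1}e_1\bar{L}_{-n}|0\rangle=\bar{L}_{-(n+1)}|0\rangle$ (which is only asserted ``by calculation''), and then runs exactly your derivation-plus-reindexing computation, but only on $(\bar{L}_{-n})_+$. You instead apply $\mathrm{ad}_{e_1}$ uniformly to the full normal-ordered sum $\sum_{i+j=-n}:a_{i,1}a_{j,1}:$ and get the coefficient $-(i'+1)-(j'+1)=n-1$ in one pass. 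This buys self-containedness: you do not need the earlier vacuum statement as an input, and in fact you recover it as a corollary by restricting to $|0\rangle$ and using $e_1|0\rangle=0$. Two bookkeeping points you raise are handled correctly: the re-normal-ordering c-numbers actually vanish term by term here (they could only arise from modes of total degree $0$, whereas all terms produced have total degree $-n-1\neq 0$ for $n\geq 1$), and the convention $a_{0,1}=0$ for the phantom term $[e_1,a_{1,1}]=-a_{0,1}$ is the same one the paper uses implicitly when it reindexes its sum. The only cosmetic caveat is the paper's wavering sign in front of $\sum:a_{i,1}a_{j,1}:$ in the definition of $2\bar{L}_n$; since the identity is homogeneous in $\bar{L}$, this does not affect either argument, and your normalization agrees with the one the paper itself uses inside its proof.
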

\begin{proof}
Since the relation (\ref{e1barL-}) hold, we only need to prove
\[
\frac{1}{(n-1)}[e_1,(\bar{L}_{-n})_+]=(\bar{L}_{-(n+1)})_+,
\]
where $(\bar{L}_{-n})_+$ include all terms which have annihilation operators in $\bar{L}_{-n}$, that is,
\[
(\bar{L}_{-n})_+=\sum_{k\geq 1}a_{-(n+k)}a_{k}.
\]
Then
\beaa
[e_1,(\bar{L}_{-n})_+]&=&\sum_{k\geq 1}([e_1,a_{-(n+k)}]a_{k}+a_{-(n+k)}[e_1,a_{k}])\\
&=&(n-1)\sum_{k\geq 1}a_{-(n+k+1)}a_{k}=(n-1)(\bar{L}_{-(n+1)})_+,
\eeaa
which means the result holds.
\end{proof}
Note that here
\[
e_1=L_{-1}\neq \bar{L}_{-1}.
\]
\begin{prp}
For $n\geq 2$,
\bea
a_{-n,2}&=&\frac{1}{(n-2)!}\text{ad}_{e_1}^{n-2}a_{-2,2}.
\eea
In the representation on 3D Young diagrams,
\bea
P_{n,2}&=&\frac{1}{(n-2)!}\text{ad}_{e_1}^{n-2}p_{2,2}.
\eea
\end{prp}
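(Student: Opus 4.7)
The plan is to use the decomposition $a_{-n,2} = 2L_{-n} + 2\bar{L}_{-n}$ (valid for all $n \geq 2$) and to establish the desired recurrence on each summand separately, exploiting that $\text{ad}_{e_1}$ is linear. So I would prove, as two independent statements, that $2L_{-n} = \frac{1}{(n-2)!}\text{ad}_{e_1}^{n-2}(2L_{-2})$ and $2\bar{L}_{-n} = \frac{1}{(n-2)!}\text{ad}_{e_1}^{n-2}(2\bar{L}_{-2})$, and then sum them.

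For the $L$-part, the assertion is essentially a direct unwinding of the definition: by construction, for $n \geq 3$ one has $2L_{-n} = \frac{1}{(n-2)!}\text{ad}_{e_1}^{n-2}\bigl([e_2,e_0]-\sigma_3[e_1,e_0]\bigr)$, and since $2L_{-2}=[e_2,e_0]-\sigma_3[e_1,e_0]$, this already reads $2L_{-n} = \frac{1}{(n-2)!}\text{ad}_{e_1}^{n-2}(2L_{-2})$. The $n=2$ case is trivial as $\text{ad}_{e_1}^{0}$ is the identity. No computation with the Yangian relations is actually required here; the definition was set up precisely so that $L$ is generated from $L_{-2}$ by iterated brackets with $e_1$.

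For the $\bar{L}$-part, the preceding proposition furnishes the operator identity $[e_1,\bar{L}_{-n}]=(n-1)\bar{L}_{-(n+1)}$ for $n \geq 2$. An easy induction on $n$ then promotes this to the closed form $\bar{L}_{-n} = \frac{1}{(n-2)!}\text{ad}_{e_1}^{n-2}\bar{L}_{-2}$: if the formula holds at level $n$, applying $[e_1,\cdot]$ and dividing by $n-1$ gives it at level $n+1$; the base $n=2$ is again tautological.

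Combining the two pieces by linearity of $\text{ad}_{e_1}$ yields
\[
a_{-n,2} = 2L_{-n} + 2\bar{L}_{-n} = \frac{1}{(n-2)!}\text{ad}_{e_1}^{n-2}\bigl(2L_{-2} + 2\bar{L}_{-2}\bigr) = \frac{1}{(n-2)!}\text{ad}_{e_1}^{n-2}a_{-2,2},
\]
which is the first claim. The companion statement $P_{n,2} = \frac{1}{(n-2)!}\text{ad}_{e_1}^{n-2}p_{2,2}$ then follows at once by applying the representation on 3D Young diagrams, since by definition $P_{n,j}$ is the image of $a_{-n,j}$ and $p_{2,2}$ is the image of $a_{-2,2}$. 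The hard work — and the only nontrivial input — is the previously proved identity $[e_1,\bar{L}_{-n}]=(n-1)\bar{L}_{-(n+1)}$; once it is in hand, the present proposition is essentially bookkeeping, the sole delicate points being the consistent normalizations at the base case $n=2$ on each side of the decomposition.
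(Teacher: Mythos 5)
Your argument is correct and is exactly the route the paper intends: the paper states this proposition without an explicit proof immediately after establishing $\frac{1}{n-1}[e_1,\bar{L}_{-n}]=\bar{L}_{-(n+1)}$, so the intended justification is precisely your decomposition $a_{-n,2}=2L_{-n}+2\bar{L}_{-n}$, with the $L$-part holding by the very definition of $a_{-n,2}$ and the $\bar{L}$-part following by induction from that commutator identity. Your identification of the base-case normalizations and the passage to $P_{n,2}$ via the representation also matches the paper's conventions.
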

Similarly to the creation operators, the annihilation operators satisfy the following relations.
\begin{prp}
\bea
&&[f_1,a_{n,1}]=na_{n+1,1},\ \ \text{for }\ \ n\neq 0,
\eea
and
\bea
a_{n,2}&=&\frac{1}{(n-2)!}\text{ad}_{f_1}^{n-2}a_{2,2},
\eea
with
\bea
a_{2,2}=-[f_2,f_0]+\sigma_3 [f_1,f_0]-f_{0}^2-2\sum_{n=1}^\infty a_{-n}a_{n+2}.
\eea
\end{prp}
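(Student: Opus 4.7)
The two identities are exact annihilation-side mirrors of the earlier Lemma $[e_1, a_{n,1}] = -n a_{n-1,1}$ and of the Proposition giving $a_{-n,2} = \frac{1}{(n-2)!}\text{ad}_{e_1}^{n-2} a_{-2,2}$. My plan is to deduce both statements from those creation-side counterparts by applying the anti-automorphism $\tilde{a}$ introduced before (\ref{quadraticform}), rather than repeating the induction argument.

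The first step is to record how $\tilde{a}$ transforms the relevant objects. Extending $\tilde{a}(e_j) = -f_j$ so that $\tilde{a}^{2} = \mathrm{id}$ forces $\tilde{a}(f_j) = -e_j$. Because $\tilde{a}$ reverses products, one has $\tilde{a}([X,Y]) = -[\tilde{a}(X),\tilde{a}(Y)]$, and consequently $\tilde{a}(\text{ad}_A^{k} B) = (-1)^{k}\, \text{ad}_{\tilde{a}(A)}^{k}\, \tilde{a}(B)$. Applied to the definitions (\ref{2Dboson}), this immediately yields the symmetric identities $\tilde{a}(a_{-n,1}) = a_{n,1}$ and $\tilde{a}(a_{n,1}) = a_{-n,1}$ for every $n>0$.

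For the first identity, I apply $\tilde{a}$ to the earlier Lemma. From $[e_1, a_{n,1}] = -n a_{n-1,1}$ one obtains $-[\tilde{a}(e_1), \tilde{a}(a_{n,1})] = -n\, \tilde{a}(a_{n-1,1})$, i.e., $[f_1, a_{-n,1}] = -n a_{-(n-1),1}$. Combining the cases $n>0$ and $n<0$ of the Lemma (the latter handled by the same argument) gives $[f_1, a_{m,1}] = m a_{m+1,1}$ for all $m \neq 0$. For the second identity, I first check that $\tilde{a}(a_{-2,2}) = a_{2,2}$ by applying $\tilde{a}$ term by term to the given expression for $a_{-2,2}$: the brackets transform as $\tilde{a}([e_2,e_0] - \sigma_3 [e_1,e_0]) = -[f_2,f_0] + \sigma_3 [f_1,f_0]$, the quadratic term as $\tilde{a}(-e_0^{2}) = -f_0^{2}$, and the Heisenberg tail as $\tilde{a}(-2 \sum_{n\geq 1} a_{-(n+2),1}a_{n,1}) = -2 \sum_{n\geq 1} a_{-n,1}a_{n+2,1}$, reproducing exactly the $a_{2,2}$ stated in the proposition. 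Applying $\tilde{a}$ to the creation-side formula $a_{-n,2} = \frac{1}{(n-2)!}\text{ad}_{e_1}^{n-2} a_{-2,2}$ and using the sign rule $(-1)^{n-2}\text{ad}_{-f_1}^{n-2} = \text{ad}_{f_1}^{n-2}$ then produces $\tilde{a}(a_{-n,2}) = \frac{1}{(n-2)!}\text{ad}_{f_1}^{n-2} a_{2,2}$. A parallel term-by-term computation starting from the definition of $a_{-n,2}$ (transforming the $\text{ad}_{e_1}^{n-2}$ piece and the normal-ordered sum separately) identifies $\tilde{a}(a_{-n,2})$ with the paper's definition of $a_{n,2}$, completing the proof.

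The main obstacle is the careful sign bookkeeping when commutators, adjoint actions, and normal-ordered sums are simultaneously moved through $\tilde{a}$; once the rules $\tilde{a}([X,Y]) = -[\tilde{a}(X),\tilde{a}(Y)]$ and $\tilde{a}(a_{\pm n,1}) = a_{\mp n,1}$ are in hand, everything reduces to matching signs and re-indexing summations, and no new algebraic content beyond the already-established creation-side analogues is required.
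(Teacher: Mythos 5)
Your proof is correct, and it takes a route the paper only gestures at: the paper offers no explicit argument here, merely the phrase ``similarly to the creation operators,'' which implicitly asks the reader to rerun the creation-side induction and the $\bar L$-computation with $f$'s in place of $e$'s. You instead make the $e\leftrightarrow f$ symmetry precise through the anti-automorphism $\tilde a$, and your sign bookkeeping checks out: $\tilde a(a_{-n,1})=a_{n,1}$ follows from (\ref{2Dboson}) exactly as you compute, $\tilde a$ applied to the Lemma $[e_1,a_{n,1}]=-na_{n-1,1}$ yields $[f_1,a_{m,1}]=ma_{m+1,1}$ after the substitution $m=-n$, and your term-by-term image of $a_{-2,2}$ reproduces the stated $a_{2,2}$ (note $a_{1,1}=-f_0$, so the $-f_0^2$ term is the $i=j=1$ piece of the normal-ordered sum $\sum_{i+j=2}:a_{i,1}a_{j,1}:$). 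What your approach buys is that nothing has to be re-proved on the annihilation side; what it costs is one additional verification you should state explicitly: that the extension $\tilde a(f_j)=-e_j$, $\tilde a(\psi_j)=\psi_j$ really defines an anti-automorphism of $\mathcal{Y}$, i.e.\ preserves all of (\ref{yangian1})--(\ref{yangian9}). This is routine but not vacuous --- the key point is that $\tilde a$ applied to (\ref{yangian1}) produces (\ref{yangian2}) precisely because the sign of the $\sigma_3\{\cdot,\cdot\}$ term flips, and likewise for the boundary conditions (\ref{yangian6})--(\ref{yangian7}); without recording this, the transfer of the creation-side identities is not yet justified. With that one sentence added, your argument is complete and arguably cleaner than repeating the paper's inductions.
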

Acting on 3D Young diagrams, the representation of $a_{n,2}$ is denoted by $P_{n,2}^\bot$. Acting on the dual vacuum state,
\[
\langle 0|P_{2,2}^\bot=\langle 0|a_{2,2}=\langle 0|(f_0f_2-\sigma_2f_0f_1-f_1f_0),
\]
which equals the dual state of $P_{2,2}|0\rangle=(e_2e_0-\sigma_3e_1e_0-e_0e_0)|0\rangle$. We have know that\cite{3-schur}
\beaa
\langle 0|P_{2,2}^\bot P_{2,2}|0\rangle&=&\langle 0|(f_0f_2-\sigma_2f_0f_1-f_1f_0)(e_2e_0-\sigma_3e_1e_0-e_0e_0)|0\rangle\\
&=& -2(1+\sigma_2+\sigma_3^2).
\eeaa

Since
\[
f_1 P_{2,2}|0\rangle=0,
\]
we have
\beaa
&&\langle 0|(f_0f_2-\sigma_2f_0f_1-f_1f_0)\underbrace{f_1\cdots f_1}_{n+1}\underbrace{e_1\cdots e_1}_{n+1}(e_2e_0-\sigma_3e_1e_0-e_0e_0)|0\rangle\nn\\
&=&\langle 0|(f_0f_2-\sigma_2f_0f_1-f_1f_0)\underbrace{f_1\cdots f_1}_{n}f_1e_1\underbrace{e_1\cdots e_1}_{n}(e_2e_0-\sigma_3e_1e_0-e_0e_0)|0\rangle\nn\\
&=&\langle 0|(f_0f_2-\sigma_2f_0f_1-f_1f_0)\underbrace{f_1\cdots f_1}_{n}e_1f_1\underbrace{e_1\cdots e_1}_{n}(e_2e_0-\sigma_3e_1e_0-e_0e_0)|0\rangle\nn\\
&-&2(n+2)\langle 0|(f_0f_2-\sigma_2f_0f_1-f_1f_0)\underbrace{f_1\cdots f_1}_{n}\underbrace{e_1\cdots e_1}_{n}(e_2e_0-\sigma_3e_1e_0-e_0e_0)|0\rangle\nn\\
&=&-(n+4)(n+1)\langle 0|(f_0f_2-\sigma_2f_0f_1-f_1f_0)\underbrace{f_1\cdots f_1}_{n}\underbrace{e_1\cdots e_1}_{n}(e_2e_0-\sigma_3e_1e_0-e_0e_0)|0\rangle.
\eeaa
From
\[
\langle 0|a_{n,2}=\frac{(-1)^n}{(n-2)!}\langle 0|a_{2,2}\underbrace{f_1\cdots f_1}_{n-2},
\]
we have
\bea
\langle 0|a_{n,2}a_{-n,2}|0\rangle=\frac{n+1}{n-2}\langle 0|a_{n-1,2}a_{-(n-1),2}|0\rangle.
\eea
Then
\bea
\langle 0|a_{n,2}a_{-n,2}|0\rangle=-2\left(\begin{array}{cc}{n+1}\\{n-2}\end{array}\right)(1+\sigma_2+\sigma_3^2).
\eea

Since
\[
L_2=-\frac{1}{2}\left([f_2,f_0]-\sigma_3 [f_1,f_0]\right), \ L_{-2}=\frac{1}{2}\left([e_2,e_0]-\sigma_3 [e_1,e_0]\right),
\]
we have
\[
[L_2,a_{n,1}]=-n a_{n+2,1},\ \ [L_{-2}, a_{n,1}]=-na_{n-2,1}.
\]
Then
\beaa
[L_{2},\bar{L}_{-2}]&=&\frac{1}{2}[L_2, a_{-1,1}^2+2\sum_{n=1}^\infty a_{-(n+2),1}a_{n,1}]\\
&=& \frac{1}{2}(a_{1,1}a_{-1,1}+a_{-1,1}a_{1,1})+\sum_{n=1}^\infty \left((n+2)a_{-n,1}a_{n,1}-na_{-(n+2),1}a_{n+2,1}\right)\\
&=&\frac{1}{2}+4\bar{L}_0.
\eeaa
Similarly, $[\bar{L}_{2},L_{-2}]=\frac{1}{2}+4\bar{L}_0.$
Therefore,
\be\label{a22a-22}
[a_{2,2}, a_{-2,2}]=8a_{0,2}-2(1+\sigma_2+\sigma_3^2),
\ee
where
\[
a_{0,2}=\psi_2-\sum_{j=1}^\infty a_{-j}a_j.
\]
This commutation relation (\ref{a22a-22}) is the same with that in (\ref{a22a-22}). Other relations can be calculate this way. By (\ref{a22a-22}), we have
\be
\langle P_{2,2}^{n+1},P_{2,2}^{n+1}\rangle=(n+1)\left(-2(1+\sigma_2+\sigma_3^2)+16n\right)\langle P_{2,2}^{ n},P_{2,2}^{n}\rangle.
\ee
Then
\be
\langle P_{2,2}^{ n},P_{2,2}^{n}\rangle=n!\prod_{j=1}^n\left(-2(1+\sigma_2+\sigma_3^2)+16(j-1)\right).
\ee

For $n\geq j$, we also have
\be
\langle P_{n,j},P_{n,j}\rangle=\left(\begin{array}{cc}{n+j-1}\\{n-j}\end{array}\right)\langle P_{j,j},P_{j,j}\rangle.
\ee
\section{3-Jack polynomials}\label{sect5}
In this section, we want to obtain the expression of 3-Jack polynomials $\tilde{J}_\pi$ for any 3D Young diagram $\pi$. It is known that Schur functions $S_\lambda$ can be determined by
\bea
e^{\sum_{n=1}^\infty\frac{p_n}{n}z^n}&=&\sum_{n\geq 0}S_nz^n \label{pnsnschur},\\
S_{\lambda}&=&\det(S_{\lambda_i-i+j})_{1\leq i,j\leq l}\label{detsn}
\eea
for $\lambda=(\lambda_1,\lambda_2,\cdots,\lambda_2)$. The formula (\ref{detsn}) is equivalent to the Pieri formula $S_nS_\lambda=\sum_{\mu}C_{n,\lambda}^\mu S_\mu$, where the Pieri formula can be found in \cite{Mac}. For example,
 \[
 S_{n,1}=\det\left(\begin{array}{cc} S_n &S_{n+1}\\ 1 &S_{1}\end{array}\right)
 \]
from (\ref{detsn}), which is the same with the Pieri formula
\[
S_1S_n=S_{n+1}+S_{n,1}.
\]
 Here we treat 2D Young diagrams $\lambda$ as the special 3D Young diagrams which have one layer in $z$-axis direction. For 3-Jack polynomials, we need to know the formula for $\tilde{J}_\lambda$ similar to (\ref{pnsnschur}), and the formula $\tilde{J}_\lambda \tilde{J}_\pi$ similar to the Pieri formula $S_nS_\lambda$.

 In \cite{3-schur}, we have obtain that
\bea
\tilde{J}_{\twoboxy}&=&\frac{1}{(h_1-h_2)(h_1-h_3)}\left((1+h_2h_3)P_1^2+(1+h_2h_3)h_1P_{2,1}+P_{2,2}\right),\label{s(1,1)p}\\
 \tilde{J}_{\twoboxx}&=&\frac{1}{(h_2-h_1)(h_2-h_3)}\left((1+h_1h_3)P_1^2+(1+h_1h_3)h_2P_{2,1}+P_{2,2}\right),\label{s11p}\\
\tilde{J}_{\twoboxz}&=&\frac{1}{(h_3-h_1)(h_3-h_2)}\left((1+h_1h_2)P_1^2+(1+h_1h_2)h_3P_{2,1}+P_{2,2}\right).\label{s(2)p}
\eea
Note that here $P_{2,2}$ equals $\sqrt{1+\sigma_2+\sigma_3^2}P_{2,2}$ in \cite{3-schur} since in this paper we want \[\langle P_{2,2},P_{2,2}\rangle=-2(1+\sigma_2+\sigma_3^2).\]
Similarly, we let $P_{3,2}$ here equal $2\sqrt{1+\sigma_2+\sigma_3^2}P_{3,2}$, since we want $P_{3,2}=e_1P_{2,2}$, which means
 \[\langle P_{3,2},P_{3,2}\rangle=-8(1+\sigma_2+\sigma_3^2).\]

We can see that they are symmetric about three coordinate axes, which means that exchanging $\twoboxy\leftrightarrow\twoboxx$ corresponds to exchanging $h_1\leftrightarrow h_2$, others are similar. We associate $h_1$ to $y$-axis, $h_2$ to $x$-axis, $h_3$ to $z$-axis to match the results in \cite{Pro}. We want this symmetry holds for all 3-Jack polynomials.

We want that 3-Jack polynomials $\tilde{J}_\pi$ behave the same with 3D Young diagrams $\pi$ in the representation of affine Yangian of $\mathfrak{gl}(1)$. For example,
\be
\langle \tilde{J}_\pi,\tilde{J}_{\pi'}\rangle=\langle \pi,{\pi'}\rangle
\ee

In \cite{3-schur}, we show that the 3-Jack polynomials become Jack polynomials defined on 2D Young diagrams when $h_1=\sqrt{\alpha},\ h_2=-1/\sqrt{\alpha}$. In fact, if we calculate the 3-Jack polynomials for general $\psi_0$, the 3-Jack polynomials will become the symmetric functions $Y_\lambda$ when $\psi_0=-\frac{1}{h_1h_2}$, where $Y_\lambda$ are defined by us in \cite{YBalgebra,WBCW}. In this paper, we show that 3-Jack polynomials become $Y_\lambda$ under other conditions. We see that when
\be
P_{2,2}=-(1+h_1h_2)(P_1^2+h_3P_{2,1}),
\ee
the 3-Jack polynomials of two boxes become
\bea
\tilde{J}_{\twoboxy}&=&\frac{1}{h_1-h_2}\left(P_{2,1}-h_2P_1^2\right)=Y_{\begin{tikzpicture}
\draw [step=0.2](0,0) grid(.4,.2);
\end{tikzpicture}},\\
 \tilde{J}_{\twoboxx}&=&\frac{1}{h_2-h_1}\left(P_{2,1}-h_1P_1^2\right)=Y_{\begin{tikzpicture}
\draw [step=0.2](0,0) grid(.2,.4);
\end{tikzpicture}},\\
\tilde{J}_{\twoboxz}&=&0.
\eea
Generally, we take
\bea
P_{n,2}=-2(1+h_1h_2)(P_1P_{n-1,1}+h_3P_{n,1}),\ \ \text{for}\ \ n>2.
\eea
For $j=3$, we take
\bea
P_{3,3}&=& (1+h_1h_2)(2+h_1h_2)(2P_1^3+3h_3P_1P_{2,1}+h_3^2P_{3,1}),\\
P_{n,3}&=&3(1+h_1h_2)(2+h_1h_2)(2P_1^2P_{n-2,1}+3h_3P_1P_{n-1,1}+h_3^2P_{n,1}), \ \ \text{for} \ n>3.\nn
\eea
For general $j$, let
\[
(1+x)(2+x)\cdots (j-1+x)=r_0+r_1x+\cdots +r_{j-1}x^{j-1}
\]
with the coefficients $r_0=(j-1)!,r_1,\cdots, r_{n-2}, r_{j-1}=1$,
we take
\bea
P_{j,j}&=&(-1)^{j-1} \prod_{k=1}^{j-1}(k+h_1h_2)(r_0P_1^j+r_1h_3P_1^{j-2}P_{2,1}+\cdots+r_{n-1}h_3^{j-1}P_{j,1}),\label{PnjPjj}\\
P_{n,j}&=&(-1)^{j-1}j\prod_{k=1}^{j-1}(k+h_1h_2)(r_0P_1^{j-1}P_{n-j+1,1}+r_1h_3P_1^{j-2}P_{n-j+2,1}+\cdots+r_{j-1}h_3^{j-1}P_{n,1}), \nn\\
&& \text{for} \ n>j.\nn
\eea
We require the 3-Jack polynomials become the symmetric functions $Y_\lambda$ under these conditions.

For $n\geq j$, define
\be
\xi_y(P,z)=\sum_{n,j=1}^\infty \frac{ P_{n,j}}{j!\left(\begin{array}{cc}{n+j-1}\\{n-j}\end{array}\right)}\frac{d_{n,j}}{h_1^j}\prod_{k=1}^{j-1}\frac{1}{k+h_2h_3}z^n,
\ee
with
\be
d_{n,j}=\begin{cases}
 1 & \text{ if } \ n=j, \\
  j& \text{ if}\ n>j.
\end{cases}
\ee
The 3D Young diagram of $n$ boxes along $y$-axis is denoted by $\underbrace{(1,1,\cdots,1)}_n$. For example, when $n=2$, $(1,1)$ is $\twoboxy$. The 3-Jack polynomials $\tilde{J}_{\underbrace{(1,\cdots,1)}_n}$ is determined by
\be
e^{\xi_y(P,z)}=\sum_{n\geq 0}\frac{1}{\langle\tilde{J}_{\underbrace{(1,\cdots,1)}_n},\tilde{J}_{\underbrace{(1,\cdots,1)}_n}\rangle h_1^n }\tilde{J}_{\underbrace{(1,\cdots,1)}_n}(P)z^n.
\ee
Note that when $P_{n,j>1}=0$, the vertex operator above becomes that for the symmetric functions $Y_{(n)}$\cite{symmetricdeformedKP}. When $P_{n,j>1}=0$ and $h_1=\sqrt{\alpha},h_2=-1/\sqrt{\alpha}$, the vertex operator above becomes that for the 2D Jack polynomials $\tilde{J}_{(n)}$\cite{deformedKPJack}.  When $P_{n,j>1}=0$ and $h_1=-1,h_2=-1$, the vertex operator above becomes that for the Schur functions $S_{(n)}$\cite{MJD,Mac}.

We list the first few terms of $\tilde{J}_{\underbrace{(1,\cdots,1)}_n}(P)$:
\beaa
\tilde{J}_{0}&=&1,\\
\frac{1}{\langle\tilde{J}_{\onebox},\tilde{J}_{\onebox}\rangle h_1 }\tilde{J}_{\onebox}&=& \frac{1}{h_1}P_{1},\\
\frac{1}{\langle\tilde{J}_{\twoboxy},\tilde{J}_{\twoboxy}\rangle h_1^2 }\tilde{J}_{\twoboxy}&=& \frac{1}{2(1+h_2h_3)h_1^2}\left((1+h_2h_3)P_1^2+(1+h_2h_3)h_1P_{2,1}+P_{2,2}\right),\\
\frac{1}{\langle\tilde{J}_{(1,1,1)},\tilde{J}_{(1,1,1)}\rangle h_1^3 }\tilde{J}_{(1,1,1)}&=& \frac{1}{6h_{1}^{3}}P_1^3+\frac{1}{2h_1^2}P_1P_{2,1}+\frac{1}{2h_1^3(1+h_2h_3)}P_1P_{2,2}+\frac{1}{3h_1}P_{3,1}\\
&&+{\frac {1}{4h_{1}^{2} \left( h_{2}\,h_{3}+1 \right) }}P_{3,2}+{\frac {1}{6 \left( h_{2}\,h_{3}+1 \right)  \left( h_{2}\,h_{3}+2
 \right) h_{1}^{3}}}
 P_{3,3},\\
 \frac{1}{\langle\tilde{J}_{(1,1,1,1)},\tilde{J}_{(1,1,1,1)}\rangle h_1^3 }\tilde{J}_{(1,1,1,1)}&=& \frac{1}{24h_{1}^{4}}P_1^4+\frac{1}{4h_1^3}P_1^2P_{2,1}+\frac{1}{4h_1^4(1+h_2h_3)}P_1^2P_{2,2}+\frac{1}{3h_1^2}P_1P_{3,1}\\
&&+{\frac {1}{4h_{1}^{3} \left( h_{2}\,h_{3}+1 \right) }}P_1P_{3,2}+{\frac {1}{6 \left( h_{2}\,h_{3}+1 \right)  \left( h_{2}\,h_{3}+2
 \right) h_{1}^{4}}}
 P_1P_{3,3}\\
 &&+\frac{1}{4h_1}P_{4,1}+\frac{1}{10h_1^2(1+h_2h_3)}P_{4,2}+\frac{1}{12\left( h_{2}\,h_{3}+1 \right)  \left( h_{2}\,h_{3}+2
 \right) h_{1}^{3}}P_{4,3}\\
 &&+\frac {1}{24 \left( h_{2}\,h_{3}+3 \right)  \left( h_{2}\,h_{3}+2
 \right)  \left( h_{2}\,h_{3}+1 \right) {h_{1}}^{4}}P_{4,4}\\
 &&+\frac{1}{4(1+h_2h_3)h_1^3}P_{2,1}P_{2,2}+\frac{1}{8h_1^2}P_{2,1}^2+\frac{1}{8(1+h_2h_3)^2h_1^4}P_{2,2}^2.
\eeaa
Since
\[
\langle\tilde{J}_{\underbrace{(1,\cdots,1)}_{n+1}},\tilde{J}_{\underbrace{(1,\cdots,1)}_{n+1}}\rangle=\prod_{j=1}^n\frac{(j+1)(j+h_2h_3)}{(jh_1-h_2)(jh_1-h_3)},
\]
we have
\beaa
\tilde{J}_{\onebox}&=&P_1,\\
\tilde{J}_{\twoboxy}&=&\frac{1}{(h_1-h_2)(h_1-h_3)}\left((1+h_2h_3)P_1^2+(1+h_2h_3)h_1P_{2,1}+P_{2,2}\right),\\
\tilde{J}_{(1,1,1)}&=&\frac{1}{(h_1-h_2)(h_1-h_3)(2h_1-h_2)(2h_1-h_3)}\left((1+h_2h_3)(2+h_2h_3)P_1^3\right.\nn\\
&&+3h_1(1+h_2h_3)(2+h_2h_3)P_1P_{2,1}+3(2+h_2h_3)P_1P_{2,2}\nn\\
&&+2h_1^2(1+h_2h_3)(2+h_2h_3)P_{3,1}+3h_1(1+\frac{1}{2}h_2h_3)P_{3,2}\left.+P_{3,3}\right),\nn
\eeaa
which are the same with that in \cite{3-schur}.
\beaa
\tilde{J}_{(1,1,1,1)}&=&\frac{1}{(h_1-h_2)(h_1-h_3)(2h_1-h_2)(2h_1-h_3)(3h_1-h_2)(3h_1-h_3)}\\
&&\left((1+h_2h_3)(2+h_2h_3)(3+h_2h_3)P_1^4\right.+6h_1(1+h_2h_3)(2+h_2h_3)(3+h_2h_3)P_1^2P_{2,1}\\
&&+6h_1(2+h_2h_3)(3+h_2h_3)P_{2,1}P_{2,2}+6(2+h_2h_3)(3+h_2h_3)P_1^2P_{2,2}\\
&&+8(1+h_2h_3)(2+h_2h_3)(3+h_2h_3)h_1^2P_1P_{3,1}+6(2+h_2h_3)(3+h_2h_3)h_1P_1P_{3,2}\\
&&+4(3+h_2h_3)P_1P_{3,3}+6(1+h_2h_3)(2+h_2h_3)(3+h_2h_3)h_1^3P_{4,1}\\
&&+\frac{12}{5}(2+h_2h_3)(3+h_2h_3)h_1^2P_{4,2}+2(3+h_2h_3)h_1P_{4,3}+P_{4,4}\\
&&+\frac{3(2+h_2h_3)(3+h_2h_3)}{(1+h_2h_3)}P_{2,2}^2+3(1+h_2h_3)(2+h_2h_3)(3+h_2h_3)h_1^2P_{2,1}^2\left.\right),
\eeaa
which is slightly different from that in \cite{WLin} since here we choose
\beaa
P_{2,2}^2|0\rangle&=& (e_2e_0e_2e_0+2\sigma_3e_0e_1e_2e_0-2\sigma_2e_1e_0e_2e_0|0\rangle-\frac{2}{3}\sigma_3e_1e_1e_1e_0
-2e_0e_0e_2e_0\\
&&-(2+\sigma_3^2)e_0e_1e_1e_0-e_0e_2e_2e_0+\sigma_3^2e_1e_0e_1e_0
+2\sigma_3e_0e_0e_1e_0+e_0e_0e_0e_0)|0\rangle.
\eeaa

For $n\geq j$, define
\bea
\xi_x(P,z)&=&\sum_{n,j=1}^\infty \frac{ P_{n,j}}{j!\left(\begin{array}{cc}{n+j-1}\\{n-j}\end{array}\right)}\frac{d_{n,j}}{h_2^j}\prod_{k=1}^{j-1}\frac{1}{k+h_1h_3}z^n,\\
\xi_z(P,z)&=&\sum_{n,j=1}^\infty \frac{ P_{n,j}}{j!\left(\begin{array}{cc}{n+j-1}\\{n-j}\end{array}\right)}\frac{d_{n,j}}{h_3^j}\prod_{k=1}^{j-1}\frac{1}{k+h_1h_2}z^n.
\eea
The 3D Young diagram of $n$ boxes along $x$-axis and $z$-axis are denoted by ${\left(\begin{array}{ccc} 1 \\ \cdots\\1\end{array}\right)}$ and $(n)$ respectively. For example, when $n=2$, ${\left(\begin{array}{cc} 1 \\1\end{array}\right)}$ is $\twoboxx$, and $(2)$ is $\twoboxz$. The 3-Jack polynomials $\tilde{J}_{{\left(\begin{array}{ccc} 1 \\ \cdots\\1\end{array}\right)}}$ and $\tilde{J}_{(n)}$ are determined by
\be
e^{\xi_x(P,z)}=\sum_{n\geq 0}\frac{1}{\langle\tilde{J}_{{\left(\begin{array}{ccc} 1 \\ \cdots\\1\end{array}\right)}},\tilde{J}_{{\left(\begin{array}{ccc} 1 \\ \cdots\\1\end{array}\right)}}\rangle h_2^n }\tilde{J}_{{\left(\begin{array}{ccc} 1 \\ \cdots\\1\end{array}\right)}}(P)z^n
\ee
and
\be
e^{\xi_z(P,z)}=\sum_{n\geq 0}\frac{1}{\langle\tilde{J}_{(n)},\tilde{J}_{(n)}\rangle h_3^n }\tilde{J}_{(n)}(P)z^n
\ee
respectively. We can see that they are symmetric about the three coordinate axes.

For 2D Young diagrams $\lambda$, which are treated as 3D Young diagrams which have one layer in $z$-axis direction, we define 3-Jack polynomials $\tilde{J}_\lambda$ in the following. Let
\be
P_{n,1}=\frac{1}{h_1}(z_1^n+z_2^n+\cdots)=\frac{1}{h_1}\sum_{k=1}^\infty z_k^n,
\ee
then $P_{n,j}$ in (\ref{PnjPjj}) equal
\beaa
P_{j,j}&=&(-1)^{j-1}\prod_{k=1}^{j-1}(k+h_1h_2)(k+h_1h_3)\frac{1}{h_1^j}\left(\right.\sum_k z_k^j\\
&&+\frac{r_0C_j^1+r_1 h_1h_3C_{j-2}^1+r_2h_1^2h_3^2C_{j-3}^1+\cdots+r_{j-2}h_1^{j-2}h_3^{j-2}C_{1}^1}{\prod_{k=1}^{j-1}(k+h_1h_3)}\sum_{k,l}z_k^{j-1}z_l\\
&&+\frac{r_0C_j^2+r_1 h_1h_3(C_{j-2}^2+C_{j-2}^0)+r_2h_1^2h_3^2C_{j-3}^2+\cdots+r_{j-3}h_1^{j-3}h_3^{j-3}C_{2}^2}{\prod_{k=1}^{j-1}(k+h_1h_3)}\sum_{k\neq l}z_k^{j-2}z_l^2\\
&&+\frac{r_0C_j^1C_{j-1}^1+r_1 h_1h_3(C_{j-2}^1C_{j-3}^1+C_{j-2}^0)+r_2h_1^2h_3^2C_{j-3}^1C_{j-4}^1+\cdots+r_{j-3}h_1^{j-3}h_3^{j-3}C_{2}^1C_1^1}{\prod_{k=1}^{j-1}(k+h_1h_3)}\\
&&\cdot\sum_{k_1,k_2,k_3}z_{k_1}^{j-2}z_{k_2}z_{k_3}+\cdots+r_0C_{j}^1C_{j-1}^1\cdots C_{1}^1\sum_{k_1<\cdots <k_j}z_{k_1}z_{k_2}\cdots z_{k_j}\left.\right),
\eeaa
and when $n>j$,
\beaa
P_{n,j}&=&(-1)^{j-1}j\prod_{k=1}^{j-1}(k+h_1h_2)(k+h_1h_3)\frac{1}{h_1^j}\left(\right.\sum_k z_k^n+\frac{\sum_{k=0}^{j-2}r_kh_1^kh_3^kC_{j-k-1}^1}{\prod_{k=1}^{j-1}(k+h_1h_3)}\sum_{k,l}z_k^{n-1}z_l\\
&&+\frac{\sum_{k=0}^{j-3}r_kh_1^kh_3^kC_{j-k-1}^2+r_0\delta_{n-j,1}}{\prod_{k=1}^{j-1}(k+h_1h_3)}\sum_{k\neq l}z_k^{n-2}z_l^2\\
&&+\frac{\sum_{k=0}^{j-3}r_kh_1^kh_3^kC_{j-k-1}^1
C_{j-k-2}^1}{\prod_{k=1}^{j-1}(k+h_1h_3)} \sum_{k_1,k_2,k_3}z_{k_1}^{n-2}z_{k_2}z_{k_3}+\cdots \\
&&+r_0C_{j-1}^1C_{j-2}^1\cdots C_{1}^1\sum_{k_1,\cdots,k_{j+1}}z_{k_1}^{n-j+1}z_{k_2}\cdots z_{k_{j+1}}\left.\right),
\eeaa
where
\[
C_j^k=\frac{j!}{k!(j-k)!}.
\]

Define $\xi_{yx,j,j}$ and $\xi_{yx,n,j}$ by
\bea
\xi_{yx,j,j}&=&\frac{P_{j,j}}{j!h_1^j}\left(\frac{r_0C_j^1+r_1 h_1h_3C_{j-2}^1+r_2h_1^2h_3^2C_{j-3}^1+\cdots+r_{j-2}h_1^{j-2}h_3^{j-2}C_{1}^1}{\prod_{k=1}^{j-1}(k+h_1h_3)}\sum_{k,l}z_k^{j-1}z_l\right.\nn\\
&&+\frac{r_0C_j^2+r_1 h_1h_3(C_{j-2}^2+C_{j-2}^0)+r_2h_1^2h_3^2C_{j-3}^2+\cdots+r_{j-3}h_1^{j-3}h_3^{j-3}C_{2}^2}{\prod_{k=1}^{j-1}(k+h_1h_3)}\sum_{k\neq l}z_k^{j-2}z_l^2\nn\\
&&+\frac{r_0C_j^1C_{j-1}^1+r_1 h_1h_3(C_{j-2}^1C_{j-3}^1+C_{j-2}^0)+r_2h_1^2h_3^2C_{j-3}^1C_{j-4}^1+\cdots+r_{j-3}h_1^{j-3}h_3^{j-3}C_{2}^1C_1^1}{\prod_{k=1}^{j-1}(k+h_1h_3)}\nn\\
&&\left.\cdot\sum_{k_1,k_2,k_3}z_{k_1}^{j-2}z_{k_2}z_{k_3}+\cdots+r_0C_{j}^1C_{j-1}^1\cdots C_{1}^1\sum_{k_1<\cdots <k_j}z_{k_1}z_{k_2}\cdots z_{k_j}\right),
\eea
and for $n>j$,
\bea
\xi_{yx,n,j}&=&\frac{P_{n,j}}{j!C_{n+j-1}^{n-j}h_1^j}\left(\frac{\sum_{k=0}^{j-2}r_kh_1^kh_3^kC_{j-k-1}^1}{\prod_{k=1}^{j-1}(k+h_1h_3)}\sum_{k,l}z_k^{n-1}z_l\right.\nn\\
&&+\frac{\sum_{k=0}^{j-3}r_kh_1^kh_3^kC_{j-k-1}^2+r_0\delta_{n-j,1}}{\prod_{k=1}^{j-1}(k+h_1h_3)}\sum_{k\neq l}z_k^{n-2}z_l^2\nn\\
&&+\frac{\sum_{k=0}^{j-3}r_kh_1^kh_3^kC_{j-k-1}^1
C_{j-k-2}^1}{\prod_{k=1}^{j-1}(k+h_1h_3)} \sum_{k_1,k_2,k_3}z_{k_1}^{n-2}z_{k_2}z_{k_3}+\cdots \nn\\
&&+\left.r_0C_{j-1}^1C_{j-2}^1\cdots C_{1}^1\sum_{k_1,\cdots,k_{j+1}}z_{k_1}^{n-j+1}z_{k_2}\cdots z_{k_{j+1}}\right).
\eea
Define
\be
T_{yx}(P,z)=\sum_{k=1}^\infty\xi_{y}(P,z_k)+\sum_{j\geq 2}\xi_{yx,j,j}+\sum_{n>j\geq 2}\xi_{yx,n,j},
\ee
and let
\bea
e^{T_{yx}(P,z)}=\sum_{i_1,i_2,\cdots,\geq 0}Q_{i_1,i_2,\cdots}(P)z_1^{i_1}z_{2}^{i_2}\cdots,
\eea
where $z=(z_1,z_2,\cdots)$. We can see that $Q_{i_1,i_2,\cdots}$ are polynomials of $P_{n,j}$ which can be determined by this equation. We list the first few of them:
\beaa
Q_1&=&\frac{1}{h_1}P_1,\\
Q_2&=&{\frac {1}{
2 \left( h_{1}\,h_{3}+1 \right)  \left( h_{2}\,h_{3}+1 \right) h_{1}^
{2}}}\left(h_{1}^{2}h_{2}h_{3}^{2}{ P_{2,1}}+h_{1}h_2h_{
3}^{2}{ P_1}^{2}+h_{1}^{2}h_{3}{ P_{2,1}}+h_{1}h_{2}h_{3}
{ P_{2,1}}\right.\\
&&\left.+h_{1}h_{3}{{ P_1}}^{2}+h_{2}h_{3} P_1^{2}+
h_{1}h_{3}{ P_{2,2}}+h_{1}{ P_{2,1}}+ P_1^{2}+{ P_{2,2}}\right),\eeaa
\beaa
Q_{3}&=&\frac{1}{12(1+h_1h_3)(1+h_2h_3)(2+h_1h_3)(2+h_2h_3)h_1^3}\left(4h_{1}^{4}h_{2}^{2}h_{3}^{4}{ P_{3,1}}+6h_{1}^{3}h_{2}^{2
}h_{3}^{4}{ P_1}{ P_{2,1}}\right.\\
&&+2h_{1}^{2}h_{2}^{2}h_{3}^{4}{
 P_1}^{3}+12h_{1}^{4}h_{2}h_{3}^{3}{ P_{3,1}}+12h_{1}^{3
}h_{2}^{2}h_{3}^{3}{ P_{3,1}}+18h_{1}^{3}h_{2}h_{3}^{3}
 P_1P_{2,1}+18h_{1}^{2}h_{2}^{2}h_{3}^{3}{P_1}
 P_{2,1}\\
 &&+6h_{1}^{2}h_{2}h_{3}^{3} P_1^{3}+6h_{1}h_{
2}^{2}h_{3}^{3}P_1^{3}+3h_{1}^{3}h_{2}h_{3}^{3}{
P_{3,2}}+6h_{1}^{2}h_{2}h_{3}^{3}{ P_1}{ P_{2,2}}+8h_{1}^{4
}h_{3}^{2}{ P_{3,1}}\\
&&+36h_{1}^{3}h_{2}h_{3}^{2}{ P_{3,1}}+12
h_{1}^{3}h_{3}^{2}{ P_1}{ P_{2,1}}+8h_{1}^{2}h_{2}^{2}h_{
3}^{2}{ P_{3,1}}+54h_{1}^{2}h_{2}h_{3}^{2}{ P_1} P_{2,1}+
4h_{1}^{2}h_{3}^{2} P_1^{3}\\
&&+12h_{1}h_{2}^{2}h_{3}^
{2}{ P_1} P_{2,1}+18h_{1}h_{2}h_{3}^{2} P_1^{3}+4
h_{2}^{2}h_{3}^{2} P_1^{3}+6h_{1}^{3}h_{3}^{2}{ P_{3,2}
}+9h_{1}^{2}h_{2}h_{3}^{2}{ P_{3,2}}\\
&&+12h_{1}^{2}h_{3}^{2}
{ P_1}{ P_{2,2}}+18h_{1}h_{2}h_{3}^{2}{ P_1}{P_{2,2}}+
24h_{1}^{3}h_{3}{ P_{3,1}}+24h_{1}^{2}h_{2}h_{3}{ P_{3,1}}
+2h_{1}^{2}h_{3}^{2}{ P_{3,3}}\\
&&+36h_{1}^{2}h_{3} P_1 P_{2,1}+36h_{1}h_{2}h_{3}{ P_1}{ P_{2,1}}+12h_{1}h_{3}
 P_1^{3}+12h_{2}h_{3} P_1^{3}+18h_{1}^{2}h_{3}
 P_{3,2}\\
 &&+6h_{1}h_{2}h_{3}{ P_{3,2}}+36h_{1}h_{3}{
P_1}{ P_{2,2}}+12h_{2}h_{3}{ P_1}{ P_{2,2}}+16h_{1}^{2}{
 P_{3,1}}+6h_{1}h_{3}{ P_{3,3}}\\
 &&+24h_{1}{ P_1}{ P_{2,1}}+8
 P_1^{3}+12h_{1}{ P_{3,2}}+\left.24{ P_1}{ P_{2,2}}+4{
P_{3,3}}
\right),
\eeaa
and
\beaa
Q_{1,1}&=&{\frac {1}{
 \left( h_{1}\,h_{3}+1 \right)  \left( h_{2}\,h_{3}+1 \right) h_{1}^
{2}}}\left(\left( h_{1}\,h_{3}+1 \right)  \left( h_{2}\,h_{3}+1 \right)P_1^2+P_{2,2}\right),\\
Q_{2,1}&=&\frac{1}{4(1+h_1h_3)(1+h_2h_3)(2+h_1h_3)h_1^3}\left(2h_1^2h_2^2h_3^3P_1P_{2,1}+2h_1h_2^2h_3^3P_1^3\right.\\
&&+6h_1^2h_2h_3^2P_1P_{2,1}+2h_1h_2^2h_3^2P_1P_{2,1}+6h_1h_2h_3^2P_1^3+2h_2^2h_3^2P_1^3+2h_1h_2h_3^2P_1P_{2,2}\\
&&+4h_1^2h_3P_1P_{2,1}+6h_1h_2h_3P_1P_{2,1}+4h_1h_3P_1^3+6h_2h_3P_1^3+h_1h_2h_3P_{3,2}+4h_1h_3P_1P_{2,2}\\
&&\left.+6h_2h_3P_1P_{2,2}+4h_1P_1P_{2,1}+4P_1^3+2h_1P_{3,2}+12P_1P_{2,2}+2P_{3,3}\right).
\eeaa
Then we can calculate the 3-Jack polynomials $\tilde{J}_\lambda$ from $Q_{i_1,i_2,\cdots}$. When $(i_1,i_2,i_3,\cdots)=(n,0,0,\cdots)$, we have
\be\label{QnJn}
Q_n(P)=\frac{1}{\langle \tilde{J}_{(n)},\tilde{J}_{(n)}\rangle h_1^n}\tilde{J}_{(n)}(P).
\ee
When
$(i_1,i_2,i_3,\cdots)=(n-1,1,0,\cdots)$, we have
\bea
Q_{n-1,1}(P)&=&\frac{1}{\langle \tilde{J}_{(n)},\tilde{J}_{(n)}\rangle h_1^n}\tilde{J}_{(n)}(P)\frac{-nh_2}{(n-1)h_1-h_2}\nn\\
&+&\frac{1}{\langle \tilde{J}_{(n-1,1)},\tilde{J}_{(n-1,1)}\rangle h_1^{n-1}}\tilde{J}_{(n-1,1)}(P)\frac{2}{h_1-h_2},\label{Qn-11}
\eea
where $(n-1,1)$ is the 2D Young diagram from $(1,1)$ by adding $n-2$ box in the first row. For example,
\beaa
Q_{2}&=& \frac{1}{\langle \tilde{J}_{(2)},\tilde{J}_{(2)}\rangle h_1^2}\tilde{J}_{(2)}(P),
\eeaa
which means
\[
\tilde{J}_{(2)}(P)=\frac{1}{(h_1-h_2)(h_1-h_3)}\left((1+h_2h_3)P_1^2+(1+h_2h_3)h_1P_{2,1}+P_{2,2}\right),
\]
which is the same with (\ref{s(1,1)p}).
\beaa
Q_{1,1}=\frac{1}{\langle \tilde{J}_{(2)},\tilde{J}_{(2)}\rangle h_1^2}\tilde{J}_{(2)}(P)\frac{-2h_2}{h_1-h_2}+\frac{1}{\langle \tilde{J}_{(1,1)},\tilde{J}_{(1,1)}\rangle h_1}\tilde{J}_{(1,1)}(P)\frac{2}{h_1-h_2},
\eeaa
which means
\[
\tilde{J}_{1,1}=\frac{1}{(h_2-h_1)(h_2-h_3)}\left((1+h_1h_3)P_1^2+(1+h_1h_3)h_2P_{2,1}+P_{2,2}\right),
\]
which is the same with (\ref{s11p}).

In symmetric functions $Y_\lambda(P)$, let $p_n=\frac{1}{h_1}\sum_k z_k^n$, we see that $Y_\lambda(P)=Y_\lambda(z)$ are symmetric about $z_1,z_2,\cdots$. As in \cite{Mac}, we regard 2D Young diagrams arranged in the reverse lexicographical order $\succ$, so that $(n)$ comes first and $1^n$ comes last. We arrange the terms in $Y_{\lambda}(z)$ the same as the order of 2D Young diagrams, so that $z_i^n$ comes first and $z_{i_1}z_{i_2}\cdots z_{i_n}$ comes last. We use the notation $c_{i_1,i_2,\cdots}^{Y_{\lambda}(z)}$ to denote the coefficient of $z_1^{i_1}z_2^{i_2}\cdots$ in $Y_{\lambda}(z)$. It can be checked that the formulas (\ref{QnJn}) and (\ref{Qn-11}) can be written as
\bea
Q_n(P)&=&\frac{1}{\langle \tilde{J}_{(n)},\tilde{J}_{(n)}\rangle }\tilde{J}_{(n)}(P)c_{n}^{Y_{(n)}(z)},\\
Q_{n-1,1}(P)&=&\frac{1}{\langle \tilde{J}_{(n)},\tilde{J}_{(n)}\rangle}\tilde{J}_{(n)}(P)c_{(n-1,1)}^{Y_{(n)}(z)}\nn\\
&&+\frac{1}{\langle \tilde{J}_{(n-1,1)},\tilde{J}_{(n-1,1)}\rangle }\tilde{J}_{(n-1,1)}(P)c_{(n-1,1)}^{Y_{(n-1,1)}(z)}.
\eea
Actually, this formula holds generally, that is, for Young diagram $\lambda$, we have
\be\label{QlambdaJlambda}
Q_\lambda(P)=\sum_{\mu\succ\lambda}\frac{1}{\langle \tilde{J}_{\mu},\tilde{J}_{\mu}\rangle }\tilde{J}_{\mu}(P)c_{(\lambda_1,\lambda_2,\cdots)}^{Y_{\mu}(z)}.
\ee
For any 2D Young diagrams $\mu$, which are treated as the 3D Young diagrams having one layer in $z$-axis direction, the 3-Jack polynomials $\tilde{J}_\mu$ can be obtained from the formula (\ref{QlambdaJlambda}).

Note that we can similarly define $T_{xy}(P,z),\ T_{xz}(P,z),\ T_{zx}(P,z),\ T_{yz}(P,z),\ T_{zy}(P,z)$. From them, the 3-Jack polynomials of 3D Young diagrams having one layer in $x$-axis direction or $y$-axis direction can be obtained. In fact, the 3-Jack polynomials of 3D Young diagrams having one layer in $x$-axis direction or $y$-axis direction can also be obtained from the 3-Jack polynomials of 3D Young diagrams having one layer in $z$-axis direction by the symmetry of 3-Jack polynomials about three coordinate axes.

To get the expressions of 3-Jack polynomials $\tilde{J}_\pi$ for all 3D Young diagrams $\pi$, we need the formula $\tilde{J}_\lambda\tilde{J}_\pi$.
Define
\be
\tilde{J}_\lambda\tilde{J}_\pi=\hat{\tilde{J}}_\lambda\cdot\tilde{J}_\pi,
\ee
where $\hat{\tilde{J}}_\lambda$ are the functions of operators $a_{-n,j}$ with $n>0$, and the actions of $\hat{\tilde{J}}_\lambda$ on $\tilde{J}_\pi$ are the same with that of affine Yangian of $\mathfrak{gl}(1)$ on 3D Young diagrams. For example,
since \[
e_0|\Box\rangle=|\twoboxy\rangle+|\twoboxx\rangle+|\twoboxz\rangle,
\]
we have
\be
\tilde{J}_\Box \tilde{J}_\Box=e_0\tilde{J}_\Box=\tilde{J}_{\twoboxy}+\tilde{J}_{\twoboxx}+\tilde{J}_{\twoboxz},
\ee
then $\tilde{J}_{\twoboxz}$ is obtained, which is the same with (\ref{s(2)p}).
From
\be
\tilde{J}_\Box \tilde{J}_{\twoboxy}=\tilde{J}_{\twoboxy}\tilde{J}_\Box=\tilde{J}_{(1,1,1)}+\tilde{J}_{\left(\begin{array}{cc} 1&1 \\ 1&\end{array}\right)_{h_1,h_2}}+\tilde{J}_{(2,1)_{h_1,h_3}},
\ee
$\tilde{J}_{(2,1)_{h_1,h_3}}$ is obtained.
Other 3-Jack polynomials of 3D Young diagrams which have more than one layer in $z$-axis direction can be obtained this way.
\section{Concluding remarks}\label{sect6}
In this paper, all results are obtained by requiring $\psi_0=1$. If interested, one can calculate the results for general $\psi_0$, which should be similar to that in this paper. For example,
\[
1+\sigma_2+\sigma_3^2=(1+h_1h_2)(1+h_1h_3)(1+h_2h_3)
\]
in this paper should be
\[
1+\psi_0\sigma_2+\psi_0^3\sigma_3^2=(1+\psi_0h_1h_2)(1+\psi_0h_1h_3)(1+\psi_0h_2h_3).
\]
for general $\psi_0$.
\[
\langle\tilde{J}_{\underbrace{(1,\cdots,1)}_{n+1}},\tilde{J}_{\underbrace{(1,\cdots,1)}_{n+1}}\rangle=\prod_{j=1}^n\frac{(j+1)(j+h_2h_3)}{(jh_1-h_2)(jh_1-h_3)}
\]
in this paper should be
\[
\langle\tilde{J}_{\underbrace{(1,\cdots,1)}_{n+1}},\tilde{J}_{\underbrace{(1,\cdots,1)}_{n+1}}\rangle=\psi_0\prod_{j=1}^n\frac{(j+1)(j+h_2h_3\psi_0)}{(jh_1-h_2)(jh_1-h_3)}.
\]
This holds since there is the scaling symmetries in the affine Yangian of $\mathfrak{gl}(1)$\cite{Pro}. The scaling symmetries say that changing the value of $\psi_0$ is equivalent to rescaling parameters $h_j, j=1,2,3$. Next, we will consider the slice of 3-Jack polynomials similar to that the slices of 3D Young diagrams are 2D Young diagrams.
\section*{Data availability statement}
The data that support the findings of this study are available from the corresponding author upon reasonable request.

\section*{Declaration of interest statement}
The authors declare that we have no known competing financial interests or personal relationships that could have appeared to influence the work reported in this paper.

\section*{Acknowledgements}
This research is supported by the National Natural Science Foundation
of China under Grant No. 12101184 and No. 11871350, and supported by Key Scientific Research Project in Colleges and Universities of Henan Province No. 22B110003.

\end{document}